\theoremstyle{plain}
\numberwithin{equation}{section}
\newtheorem{thm}{Theorem}[section]
\newtheorem{prop}[thm]{Proposition}
\newtheorem{lemm}[thm]{Lemma}
\newtheorem{cor}[thm]{Corollary}
\theoremstyle{remark}
\newtheorem{rema}[thm]{Remark}
\renewcommand{\div}{{\rm div}}
\newcommand{\bq}{\begin{equation}}
\newcommand{\eq}{\end{equation}}
\def\Xint#1{\mathchoice
{\XXint\displaystyle\textstyle{#1}}%
{\XXint\textstyle\scriptstyle{#1}}%
{\XXint\scriptstyle\scriptscriptstyle{#1}}%
{\XXint\scriptscriptstyle\scriptscriptstyle{#1}}%
\!\int}
\def\XXint#1#2#3{{\setbox0=\hbox{$#1{#2#3}{\int}$ }
\vcenter{\hbox{$#2#3$ }}\kern-.6\wd0}}
\def\dashint{\Xint-}
\begin{document}
\title[Asymptotic Stability for the invscid IPM equation]{On the Asymptotic Stabilty of Stationary solutions of the inviscid incompressible porous medium equation}
\author{Tarek M. Elgindi}
\date{\today}
\maketitle

\begin{abstract}
We study the stability of stationary solutions of the 2D inviscid incompressible porous medium equation (IPM). We show that solutions which are near certain stable stationary solutions must converge as $t\rightarrow\infty$ to a stationary solution of the IPM equation. It turns out that linearizing the IPM equation about certain stable stationary solutions gives a non-local partial damping mechanism. On the torus, the linearized problem has a very large set of stationary (undamped) modes. This makes the problem of long-time behavior more difficult since there is the possibility of a cascading non-linear growth along the stationary modes of the linearized problem. We solve this by, more or less, doing a second linearization around the undamped modes, exploiting a special non-linear structure, and showing that the stationary modes can be controlled.

\end{abstract}
\setstretch{1.3}

\section{Introduction}

Incompressible fluid equations have received much attention from the PDE community in recent years due to the mathematical challenges and many interesting phenomena that they present. Depending upon the specific physical situation that a given fluid equation models, we find vastly different mathematical objects arising. This is particularly the case when studying questions related to the long-time behavior of solutions. In recent years, researchers have discovered numerous interesting phenomena such as the existence of a wide array of stationary states, solutions which are periodic (see \cite{HmidiMateu} or \cite{Cordoba}) or quasi-periodic in time (see \cite{CF} or \cite{ElgindiJeong}), the existence of solutions which experience rapid growth leading to singularity formation either in finite \cite{KRYZ} or infinite time \cite{KiselevSverak}, the existence of solutions whose long-time behavior is determined entirely by some linear or dispersive effect \cite{GMS}, the existence of solutions whose long-time behavior are determined by some non-linear mixing effect (inviscid damping)(see, for example \cite{BedrossianMasmoudi}, \cite{VillaniMouhot}, or \cite{BMM}), as well as the existence of solutions which simply decay to 0 due to coercive dissipative mechanisms \cite{MR1925398}. 

The purpose of this work is to investigate a phenomenon which does not seem to have received much attention in the fluids community until recently: partial dissipation. Roughly speaking, a \emph{fully} dissipative system is one where all "movement" is damped by some mechanism such as diffusion or drag. A \emph{partially} dissipative system, on the other hand, is one where only certain types of motion are damped. It is easy to see that both of these situations can arise rather naturally from different physical scenarios. For example, in a physical system where there is gravity and stratification, vertical movement may be penalized while horizontal movement is not.
\subsection{The inviscid IPM equation}

We will study fluids which are stratified by density in the absence of diffusion; such fluids can be modeled by the the inviscid incompressible porous medium equation:

\begin{equation}
\frac{\mu}{\kappa}u=-\nabla p-(0,g\rho),
\end{equation}
\begin{equation}
\partial_t\rho+u\cdot\nabla\rho=0,
\end{equation}
\begin{equation} \div(v)=0\end{equation}
where $u$ is the velocity field of the fluid, $p$ is the pressure, $\mu$ is the dynamic viscosity, $\kappa$ is the permeability of the isotropic medium, $\rho$ is the liquid density and $g$ is the gravitational acceleration. 
When these equations are studied on a bounded domain, we assume that $u$ satisfies the no-slip boundary condition:
$$u\cdot n=0,$$
on the boundary of the domain where $n$ is the normal to the boundary.
Our goal here is to study the (non-linear) stability of exact solutions to this system. For simplicity, let's take $\mu=\kappa$ and $g=-1$. 
We note that this system can also be written as follows:
\begin{equation} \partial_t\rho+ u\cdot\nabla \rho=0\end{equation}
\begin{equation} u=R_1 R^\perp \rho \end{equation}
where $R^\perp=(R_2,-R_1)$ and $R_1, R_2$ are the Riesz transforms and this is exactly the same as the surface quasi-geostrophic system except that $u=R^\perp \rho$ in that case (see \cite{Resnick}). 

We will show that the stratification inherent in the model serves as a stabilizing mechanism for solutions which are in a small (Sobolev) neighborhood of certain stable steady states. Indeed, one can imagine that a fluid with density that is proportional to depth (i.e. that the density of the fluid increases the deeper you go into the fluid)
is, in some sense, ``stable.'' On the other hand, if the density is inversely proportional to depth, then one would imagine that such a scenario is unstable--indeed this is where one sees the so-called Rayleigh-Darcy convection or Rayleigh-Benard convection even in the presence of diffusion \cite{MR1925398}.
We are concerned mainly with the stable case. While it is clear that \emph{with} diffusion in the equation for the density, small solutions will eventually decay to zero, this is not clear \emph{without} diffusion. The question we wish to ask here is whether one is able to establish non-linear stability results for (1.1)-(1.3). We will show that this is indeed the case.
In fact, we will be able to prove that smooth perturbations of the stationary solution $\rho(x,y):=y$ are stable for all time in Sobolev spaces. 

We will do this in two settings which are fundamentally different: on the whole space $\mathbb{R}^2$ and on the two dimensional torus $\mathbb{T}^2$. On $\mathbb{R}^2$ the corresponding linearized problem has a damping mechanism and we are able to show that perturbations of a constant gradient density go to 0 as time goes to infinity. This is done despite the very weak damping which the linear problem affords us by using a special structure in the non-linear term. The problem on $\mathbb{T}^2$ is different. In that case the linearized problem does not damp all Fourier modes and is partially dissipative. In fact, there is a large set of undamped modes which preclude the possibility of the perturbation vanishing as time goes to infinity. Despite this, we will show that the full solution settles (in the long time limit) on another stratified stationary solution that is determined by the non-linear evolution. 

\subsection{Coercive vs. non-coercive dissipation}
Before going into the details of the IPM equation, we wish to discuss the general picture of the \emph{type} of problem we are looking at and the differences between partially dissipative and fully dissipative systems. Consider the following abstract equation:
$$  \partial_t f +N(f)=L(f) $$
Here we assume that $L$ is a negative linear operator in the sense that $$(L(f),f)= -|A f|^{\frac{1}{2}}$$ for some linear operator $A$ and $N$ is a non-linear operator in the sense that in some nice function spaces $X$ and $Y$ (say $H^s$ and $H^{s+1}$ for example) $N$ satisfies:
$$|N(f)|_{X}\lesssim |f|_{Y}^2.$$  
A simple example of a fully dissipative system is the case where $N(f)=f^2$ and $L(f)=-f$. In this case we have:

$$\partial_t f=f^2-f.$$ Now, in the setting of small initial data, where the sup-norm of the initial profile $f_0$ is strictly smaller than 1, it is clear that any initial profile will decay to $0$ exponentially fast as $t\rightarrow\infty.$ A simple example of an \emph{partially} dissipative system is the following simple model equation posed on $[-\pi,\pi)$:
$$\partial_t f=f^2 -(f- \tilde f),$$ where $\tilde f:=\frac{1}{2\pi}\int_{-\pi}^\pi f(x,t)dx.$
One notices that $L(f)=f-\tilde f$ is non-negative on $L^2(-\pi,\pi)$:
$$(f-\tilde f, f)_{L^2}=|f-\tilde f|_{L^2}^2\geq 0.$$
However, despite the linearized problem being stable, it is clear that if $f_0\equiv \delta$ for any constant $\delta>0,$ then $f$ actually grows to infinity in finite time! This is to say that the linear problem, while damping all Fourier modes except the constant mode, was not strong enough to stop the non-linear term from causing infinite growth no matter how small the data is. On the other hand, a second instructive example is the following one:
$$\partial_t f=f\partial_x f - (f-\tilde f).$$ For this system, in fact, one can prove that if the quantity $|\partial_x f|_{L^\infty} $ is initially small enough then there exists a global strong solution for which $\partial_x f$ remains small for all time. This can be seen in the following way:
$$\partial_t \partial_x f=(\partial_x f)^2+f\partial_{xx}f -\partial_x f.$$ Hence, by a maximum principle argument,  if $|\partial_x f|_{L^\infty}\leq 1$ initially, then $|\partial_x f|_{L^\infty}\leq 1$ for all time. In fact, it is easy to see that $A(t):=|f|_{L^\infty}(t)$ actually satisfies the ODE: $$\frac{d}{dt}A=A^2-A$$ so that if  $|\partial_xf|_{L^\infty}<1$ initially, it must decay exponentially. Furthermore, it is easy to see that the average of $f$ is constant in time. Hence, one sees that $f(t,\cdot)\rightarrow \tilde f_0$ exponentially as $t\rightarrow\infty$ strongly in $W^{1,\infty}$. The key difference between this example and the previous one is that the stationary mode--the average of $f$--could undergo a non-linear growth in the first example while, in the second example, the linearly stationary mode cannot "produce itself" through the non-linearity.    

We see, even at this simple level, that the \emph{precise} interaction between the linear partial damping term and the non-linear term is important. We now come back to our more abstract setting:
$$  \partial_t f +N(f)=L(f) $$ with $$(Lf,f)=-|Af|^2$$ where the inner product and norm are taken in a suitable function space.
We have seen that unless $A$ is coercive in the sense that $|Af|\geq c |f|$ it is generally difficult to prove that 0 is asymptotically stable\footnote{It is important to note here that there is a difference between what we are describing here and hypocoercivity in that in the setting of hypocoercivity there are two non-commuting linear operators: one partially dissipative operator as we are describing and another which is skew symmetric. In the setting of hypocoercivity the skew symmetric operator 'helps' the partially dissipative operator and the effect of the two becomes in some sense fully dissipative. This is not the case here where we only have the partially dissipative operator by itself. }. Nonetheless, it is possible that for some non-linear problems we can still prove stability in the following way. 

Let us suppose that we are studying this equation on the periodic domain $(-\pi,\pi)$. Let us further suppose that $\mathbb{Z}$ (which represents the Fourier modes) can be decomposed into two disjoint sets $D$ and $S$, the dissipative set and the stationary set: $$\mathbb{Z}=D \cup S$$ such that $$L(e^{inx})=0$$ for $n\in S$ and $$L(e^{inx})=-e^{inx}$$ for $n\in D$. 

In this case, it seems that if we solve $$\partial_t f + N(f) = L(f)$$ that small solutions $f$ may not decay to 0 but maybe they can be decomposed into $f_D$ and $f_S$ (a dissipative part and a stationary part) such that $f_D$ goes to 0 and $f_S$ merely remains small. Being able to do this effectively will depend greatly upon the nature of $D$ and $S$ (i.e. the structure of the operator $L$) as well as the nature of the nonlinearity $N$. Determining general conditions on $N$ and $L$ to ensure that the solution $f$ can be controlled for all time seems to the author to be a fundamental problem. In this paper, we encounter precisely this setting and we deal with it by studying the linearization of $N$ around the corresponding set $S$. In our setting, one point which is immensely important is that $$N(f_S)=0.$$ 

Getting general conditions on $N$ and $L$ to ensure that solutions remain bounded for all time seems to be a challenge. However, the intuition one gathers from this work is that there should be a condition on the linearization of $N$ around the elements of the stationary set $S$.

In the coming subsections we outline the main results of this work, which is the asymptotic stability of the stratified state $\rho(x,y)\equiv y$ both for perturbations which decay on $\mathbb{R}^2$ and perturbations which are periodic. We begin by observing that the full non-linear system has a very simple class of steady states when the equation is considered on a closed domain. 

\subsection{The steady states}When studying fluid equations, it is often helpful to have a good understanding of the exact steady states of the system.
The kinds of exact solutions we are interested in are: $\rho=f(y)$, $u=0$, and $p=0$. It is trivial to check that these are stationary solutions for any $C^1$ function, $f$. 
In fact, under mild assumptions, these are the only stationary solutions to the equation when studied on a bounded domain. 
\begin{lemm}
Let $\rho$ be a $C^{1}$ stationary solution of the inviscid IPM equation on a bounded domain $\Omega$. Then, $\rho$ is a function of $y$ only and, in particular, $u\equiv 0$. 
\end{lemm}

\begin{proof}

We know that $$u\cdot\nabla\rho=0.$$
This means that $$\int u\cdot\nabla\rho y=0.$$ 
Due to the no-slip condition on $u$ in the bounded domain case and using the divergence-free condition we see:
$$\int u\cdot\nabla y \rho=0.$$
This implies, on the one hand, that:
$$\int u_2 \rho=0$$
On the other hand, if we take the equation $$u+\nabla p=(0,\rho)$$ and dot with $u$ we see (after an integration by parts and using the boundary conditions)
$$\int u_2\rho=\int |u|^2.$$
Thus, $u\equiv 0$. As a result, $\rho_x\equiv 0$ and the lemma is proved.
\end{proof}

An easy corollary of the proof of Lemma 1.1 is that, in the $\mathbb{R}^2$ or bounded domain case, solutions $\rho(x,y,t)$ gravitate towards becoming a function of $y$ only:  
\begin{cor}
If $\rho$ is a smooth solution (1.4)-(1.5), with $\rho$ decaying sufficiently fast, $$\partial_t \int \rho(x,y) y dxdy=-\int |u(x,y)|^2 dxdy.$$
Hence, $$\partial_t \int |\rho(x,y)-y|^2dxdy=-2\int|u(x,y)|^2.$$ 
\end{cor}

Although, as far as active scalar equations go, (1.4)-(1.5) shares many similarities with the surface quasi-geostrophic equation and the 2D Euler equations, the IPM system has a very simple structure of stationary solutions. It turns out, in addition, that the linearized IPM equation around any one of these steady states has 
a special structure which allows us to deduce results on the long time behavior of perturbations of stationary solutions in a relatively easy fashion. In fact, this special structure, at least in the $\mathbb{R}^2$ case, is already in Corollary 1.2.  

Now, as was shown in Lemma 1.1, $\rho=f(y)$ and $u=0$ are stationary solutions of this system. Now suppose that we perturb these stationary solutions a little bit: $\rho=f(y)+\tilde{\rho}$ and $u=\tilde{u}.$ Then we see:
$$\tilde{u}=-\nabla p +(0,\tilde{\rho})$$
and 
$$\partial_t \tilde\rho+(\tilde u)\cdot\nabla (f(y)+\tilde\rho)=0.$$
Now we rewrite $\tilde u$ as $u$ and $\tilde\rho$ as $\rho$ and simplify:
\begin{equation}{u}=-\nabla p +(0,{\rho})\end{equation}
$$\partial_t \rho +u_2f'(y)+v\cdot\nabla\rho=0$$

It is clear that under the no-slip boundary condition ($u\cdot n=0$) and under the assumption that the domain is simply connected, we may pass to the stream function formulation where we define $u=\nabla^\perp \psi$:
$$\Delta \psi =-\partial_x\rho$$ and $\psi=0$ on the boundary of the domain. 
This implies that $u_2=-R\rho$ where $R=\partial_{xx}(-\Delta)^{-1}$. Here, $(-\Delta)^{-1}$ is the inverse of the Dirichlet Laplacian. 
Thus, our equation reads:
\begin{equation}\partial_t \rho -f'(y)R\rho+u\cdot\nabla\rho=0\end{equation}

What is interesting about this equation is that $R$ is a negative operator so we get a mild dissipation effect.  
This structure will allow us to prove stability. 

\begin{thm} (Main Result on $\mathbb{R}^2$)
 
Let $\Omega(y):=y$.There exists $\epsilon_0>0$ such that if we solve IPM with initial data $\rho_0=\tilde\rho_0+\Omega$ with $|\tilde\rho_0|_{W^{4,1}}+|\tilde\rho_0|_{H^s}\leq \epsilon\leq \epsilon_0$, $s\geq 20$ then the solution $\rho$ satisfies the following: 

\begin{enumerate}
\item \qquad\qquad\qquad\qquad $ |\rho(t)-\Omega|_{H^3}\lesssim \frac{\epsilon}{t^{1/4}} \,\,\, \forall \, t>0,$
\item \qquad\qquad\qquad\qquad $ |u_1|_{H^3}\lesssim \frac{\epsilon}{t^{3/4}},$
\item \qquad\qquad\qquad\qquad $|u_2|_{H^3}\lesssim \frac{\epsilon}{t^{5/4}},$
\end{enumerate}

where $u=R_1 R^\perp {\rho}.$

\end{thm}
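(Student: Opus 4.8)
\medskip
\noindent\textbf{Proof plan.}
The plan is to work with the perturbation equation already derived above: since $\Omega(y)=y$ we have $f'\equiv 1$, so (again writing $\rho$ for $\rho-\Omega$) the perturbation solves $\partial_t\rho+\mathcal D\rho+u\cdot\nabla\rho=0$, where $\mathcal D$ is the Fourier multiplier with symbol $\xi_1^2/|\xi|^2$ and $u=\nabla^\perp(-\Delta)^{-1}\partial_x\rho$. Here $\mathcal D\ge 0$ is the mild dissipation, $u_2=-\mathcal D\rho$ and $u_1$ has symbol $\xi_1\xi_2/|\xi|^2$; the structural fact to keep in mind is that $u$ carries one extra factor of $\partial_x$ relative to $\rho$, so that $u\cdot\nabla\rho=\div(u\rho)$ is a divergence of quantities each carrying a $\partial_x$. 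The dissipation $\mathcal D$ degenerates precisely on the resonant set $\{\xi_1=0\}$ — the frequencies of the competing $y$-dependent steady states of Lemma above — and every decay rate in the statement is the angular decay produced by that one-dimensional set of bad directions in frequency.

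\smallskip
\noindent\textbf{Step 1 (anisotropic linear estimates).} First I would analyse the semigroup $e^{-t\mathcal D}$, of symbol $e^{-t\xi_1^2/|\xi|^2}$. Writing $\xi=|\xi|(\cos\theta,\sin\theta)$ and using that $\{\theta:\ |\cos\theta|\le t^{-1/2}\}$ has measure $\lesssim t^{-1/2}$, one gets for integers $k,m\ge 0$ a bound of the form $\big\|(\xi_1/|\xi|)^k e^{-t\mathcal D}g\big\|_{H^m}\lesssim \langle t\rangle^{-(2k+1)/4}\|g\|_{X^{m+k_0}}$, where $X^{\sigma}$ is a Sobolev space of order $\sigma$ reinforced by an $\langle x_1\rangle$-type weight — strong enough to control $\widehat g$ uniformly across $\{\xi_1=0\}$, and accounting for the loss of a fixed number $k_0$ of derivatives. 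Taking $k=0,1,2$ and $m=3$ produces exactly the rates $\langle t\rangle^{-1/4}$, $\langle t\rangle^{-3/4}$, $\langle t\rangle^{-5/4}$ of conclusions (1)--(3), since $u_1$ and $u_2$ carry one, resp.\ two, extra factors of $\xi_1/|\xi|$ relative to $\rho$. The same angular splitting gives the interpolation inequality $\|h\|_{H^3}^2\lesssim t\,\|\mathcal D^{1/2}h\|_{H^3}^2+t^{-1/2}\|h\|_{X^s}^2$, which is what converts the degenerate dissipation into a genuine decay rate at the nonlinear level.

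\smallskip
\noindent\textbf{Step 2 (nonlinear bootstrap).} On the maximal interval of existence I would run a bootstrap on $\|\rho\|_{X^s}\le 2\epsilon$, $\|\rho\|_{H^3}\le 2C\epsilon\langle t\rangle^{-1/4}$, $\|u_1\|_{H^3}\le 2C\epsilon\langle t\rangle^{-3/4}$ and $\|u_2\|_{H^3}\le 2C\epsilon\langle t\rangle^{-5/4}$. For the high norm, the energy identity is $\tfrac{d}{dt}\|\rho\|_{H^s}^2=-2\|\mathcal D^{1/2}\rho\|_{H^s}^2+\mathrm{NL}$, and the nonlinear term is controlled using the divergence-free structure (integrating by parts so no derivative is lost at top order) together with the decay of the lower-order norms — the key being to route $\mathrm{NL}$ through $\div(u\rho)$ so that the slowly-decaying factors always come with an extra $\partial_x$, which by Step 1 yields the extra time-decay needed for integrability; this keeps $\|\rho\|_{X^s}\lesssim\epsilon$. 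For the decaying norms, combine $\tfrac{d}{dt}\|\rho\|_{H^3}^2\le-2\|\mathcal D^{1/2}\rho\|_{H^3}^2+\mathrm{NL}$ with the interpolation inequality of Step 1 and the $X^s$-bound to get $\tfrac{d}{dt}\|\rho\|_{H^3}^2\lesssim -\langle t\rangle^{-1}\|\rho\|_{H^3}^2+\epsilon^2\langle t\rangle^{-3/2}+\mathrm{NL}$; a Gr\"onwall/ODE comparison then gives $\|\rho\|_{H^3}\lesssim\epsilon\langle t\rangle^{-1/4}$, and the identical scheme with one, resp.\ two, extra factors $\xi_1/|\xi|$ gives the bounds for $u_1$ and $u_2$. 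Taking $\epsilon$ small closes the bootstrap; global existence follows from the resulting a priori control of $\|\rho\|_{H^s}$ via a standard continuation criterion; and on $\R^2$ the only stationary perturbation lying in a space carrying any $x_1$-decay is the zero function, so the solution is forced to converge to $\Omega$ itself.

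\smallskip
\noindent\textbf{Main obstacle.} The hard part is that $\mathcal D$ provides no decay at all on the resonant set $\{\xi_1=0\}$, while Sobolev norms of order larger than $3$ do not decay, so brute-force nonlinear estimates are hopeless. The argument is rescued by the structural fact that $u$ — hence every term of $u\cdot\nabla\rho=\div(u\rho)$ — carries a factor of $\partial_x$: this annihilates the purely resonant contribution of the nonlinearity and, through the anisotropic estimate of Step 1, upgrades the slowly-decaying pieces to quantities that decay fast enough to be integrated in time. Making this rigorous — in particular, propagating the anisotropic/weighted norm $X^s$ near $\{\xi_1=0\}$, where physical-space $x_1$-weights tend to grow along the flow and must be tamed precisely because the nonlinearity feeds those modes only through already-decaying $\partial_x$-quantities — is the technical heart of the proof, and is where the generous threshold $s\ge 20$ is consumed.
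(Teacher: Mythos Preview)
Your proposal correctly identifies all the key structural ingredients --- the degenerate dissipation with symbol $\xi_1^2/|\xi|^2$, the decay hierarchy $t^{-(2k+1)/4}$ from $k$ factors of $\xi_1/|\xi|$, the fact that each term of $u\cdot\nabla\rho$ carries two $\partial_x$-type factors, and the bootstrap scheme closing on integrable decay of $\nabla u_2$. The route, however, differs from the paper's in two ways. First, for the low-norm decay you use an energy/interpolation/Gr\"onwall argument; the paper instead uses Duhamel's formula, combining a lossy $W^{1+\delta,1}\to L^2$ linear estimate (the sharp rates $t^{-(2k+1)/4}$ of your Step~1) with a lossless $L^2\to L^2$ estimate (giving the weaker rates $t^{-k/2}$, used to first bootstrap $|u_2|_{H^{10}}\lesssim\epsilon t^{-1}$ without derivative loss and then upgrade). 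Second --- and more significant --- you plan to propagate the weighted norm $X^s$ along the flow and flag this as the technical heart; the paper sidesteps this entirely. Its high norm is plain $H^{20}$, and the $L^1$-scale control needed to feed the sharp $W^{1+\delta,1}\to L^2$ linear estimate is obtained \emph{on the Duhamel nonlinearity} via Cauchy--Schwarz, e.g.\ $|u\cdot\nabla\rho|_{W^{7,1}}\lesssim |u|_{H^7}|\nabla\rho|_{H^7}$: the bilinear structure converts two $L^2$-scale quantities into one $L^1$-scale quantity. Thus the $L^1$/weighted hypothesis is imposed on $\rho_0$ only and never propagated; the step you correctly identify as hardest in your scheme is one the paper simply avoids.
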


\begin{thm} (Main Result on $\mathbb{T}^2$)
 
The stationary solution $\Omega$ of the IPM equation is asymptotically stable in $H^s(\mathbb{T}^2)$ for $s\geq 20.$ In other words, there exists $\epsilon_0>0$ such that if we solve IPM with initial data $\rho_0:=\tilde\rho_0+\Omega$ with $|\tilde\rho_0|_{H^s}\leq \epsilon\leq \epsilon_0$ then the solution $\rho$ satisfies the following: 

\begin{enumerate}
\item \qquad\qquad\qquad\qquad$ |\rho(t)-\Omega|_{H^{20}}\leq 2\epsilon \,\,\, \forall \, t>0$
\item  \qquad\qquad\qquad\qquad$ |u|_{H^3}\lesssim \epsilon t^{-2.5}$
\end{enumerate}
\end{thm}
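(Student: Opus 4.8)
\emph{Proof strategy.} Take the steady profile $\Omega(y)=y$, so $f'\equiv 1$ and the linearised operator is the constant--coefficient Fourier multiplier; writing $\rho$ for the perturbation $\tilde\rho$, the stream--function formulation from the excerpt becomes
\[
\partial_t\rho+\L\rho+u\cdot\nabla\rho=0,\qquad u=R_1R^\perp\rho,
\]
where $\L$ is the non--negative multiplier with symbol $k_1^2/|k|^2\in[0,1]$. Two elementary but decisive facts will be used throughout: (i) $u$ annihilates all modes with $k_1=0$, so $u$ depends only on the $x$--fluctuation $\rho_{\neq}:=\rho-\bar\rho$, where $\bar\rho(y,t):=\tfrac1{2\pi}\int\rho\,dx$; and (ii) a one--line Fourier computation gives $|\widehat u(k)|^2=\tfrac{k_1^2}{|k|^2}|\widehat\rho(k)|^2$, whence $\langle\L\rho,\rho\rangle_{H^m}=\|u\|_{H^m}^2$ for every $m$, and moreover, on $\mathbb T^2$ where $|k_1|\ge 1$ on the support of $\widehat{\rho_\neq}$, one has the improvement $\|\rho_\neq\|_{\dot H^m}\le\|u\|_{\dot H^{m+1}}$ — the dissipation genuinely controls the fluctuation, one derivative down. (This gain is exactly what is unavailable on $\R^2$, and is why the $\mathbb T^2$ statement is a uniform bound rather than a slow decay of $\rho-\Omega$.) Local well--posedness in $H^s$ ($s$ large) is standard since $u$ is an order--zero function of $\rho$; the task is the global a priori estimate.

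I would run a continuity/bootstrap argument on the pair consisting of $A(t):=\sup_{\tau\le t}\|\rho(\tau)\|_{H^s}$ and a finite family $B_m(t):=\sup_{\tau\le t}(1+\tau)^{\gamma_m}\big(\|\rho_\neq(\tau)\|_{H^m}+\|u(\tau)\|_{H^m}\big)$, $3\le m\le s-c_0$, with rates $\gamma_m$ increasing as $m$ decreases and $\gamma_3\ge\tfrac52$. Assume $A(t)\le 2\epsilon$ and $B_m(t)\le\epsilon$ on a maximal interval, with $\epsilon\sim\epsilon_0$. To improve $A$: from the energy identity $\tfrac{d}{dt}\|\rho\|_{H^s}^2=-2\|u\|_{H^s}^2-2\langle u\cdot\nabla\rho,\rho\rangle_{H^s}$, the transport commutator is controlled in the usual (Kato--Ponce) way by $\|\nabla u\|_{L^\infty}\|\rho\|_{H^s}^2\lesssim\|u\|_{H^{s-c_0}}\|\rho\|_{H^s}^2\lesssim\epsilon(1+t)^{-\gamma_{s-c_0}}\|\rho\|_{H^s}^2$, and $\int_0^\infty(1+t)^{-\gamma_{s-c_0}}dt<\infty$ once $\gamma_{s-c_0}>1$; Gr\"onwall then gives $A(t)\le\|\rho_0\|_{H^s}e^{C\epsilon}\le\tfrac32\epsilon_0$ for $\epsilon_0$ small. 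The point is that the nonlinear forcing of the top norm is time--integrable \emph{because the low velocity norms decay}.

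To improve the $B_m$: split $u\cdot\nabla\rho=u\cdot\nabla\rho_\neq+(\partial_y\bar\rho)\,\L\rho_\neq$ (using $\bar\rho$ is $x$--independent and $u_2=\L\rho=\L\rho_\neq$), absorb the second term into a modified dissipation $(1+\partial_y\bar\rho)\L$, which for $\epsilon_0$ small is still dissipative up to a lower--order $\|\partial_y\bar\rho\|_{C^2}$--correction (it is nothing but the IPM linearisation around the slowly drifting profile $\Omega+\bar\rho(t)$, which ``does not go against gravity''). For $\|\rho_\neq\|_{\dot H^m}$ I would then run the classical decay--by--interpolation scheme: a Gagliardo--Nirenberg inequality in Fourier bounds $\|\rho_\neq\|_{\dot H^m}$ by $\|u\|_{\dot H^m}^{1-\theta}\|\rho_\neq\|_{\dot H^{M}}^{\theta}$ for suitable $M>m+1$, $\theta<1$; inserting the bootstrapped bound $\|\rho_\neq\|_{\dot H^{M}}\le A\lesssim\epsilon_0$ into $\tfrac{d}{dt}\|\rho_\neq\|_{\dot H^m}^2\lesssim-c\|u\|_{\dot H^m}^2+(\text{quadratic forcing})$ produces a Riccati--type differential inequality, hence $\|\rho_\neq(t)\|_{\dot H^m}\lesssim\epsilon_0(1+t)^{-\gamma_m}$ with $\gamma_m$ as large as the regularity budget $M-m$ permits; the quadratic forcing $u\cdot\nabla\rho_\neq$ is absorbed precisely because \emph{both} of its factors now carry decay, and $\|u(t)\|_{H^m}\lesssim\|\rho_\neq(t)\|_{H^{m+1}}$ closes the $u$--part of $B_m$. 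Choosing $s$ large enough that every exponent in this finite hierarchy is consistent (a finite list of linear inequalities; $s\ge 20$ is comfortably sufficient) closes the bootstrap and yields $\|\rho(t)-\Omega\|_{H^4}\le 2\epsilon$ and $\|u(t)\|_{H^3}\lesssim\epsilon(1+t)^{-5/2}$.

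For asymptotic stability: averaging the equation in $x$ gives $\partial_t\bar\rho=-\partial_y\overline{u_2\rho}$, and since $u_2=\L\rho$ inherits $\|u_2(t)\|_{H^3}\lesssim\epsilon(1+t)^{-5/2}$ while $\|\rho(t)\|_{H^4}\lesssim\epsilon_0$, the right--hand side is $O(\epsilon\epsilon_0(1+t)^{-5/2})$ in $H^2$, which is integrable in time; hence $\bar\rho(t)\to\rho_\infty(y)$ in $H^2$ with $\|\rho_\infty-\bar\rho_0\|_{H^2}\lesssim\epsilon\epsilon_0$, and $\Omega+\rho_\infty$ is again a stationary solution by Lemma~1.1. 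Together with $\|\rho_\neq(t)\|_{H^3}\to0$ this gives $\rho(t)\to\Omega+\rho_\infty$, i.e.\ asymptotic stability, and the quantitative bounds $(1)$--$(2)$ follow. \emph{Where the difficulty lies:} the dissipation $\L$ is doubly degenerate — identically zero on the $x$--average (which therefore merely converges rather than decays) and of strength only $\sim k_1^2/k_2^2$ on modes with $|k_2|\gg|k_1|$ — so polynomial--in--time decay of any fixed low Sobolev norm can only be purchased by controlling a much higher norm; organising this ``regularity--for--decay'' trade so that every nonlinear and commutator term is genuinely of higher order, while simultaneously keeping the perturbed operator $(1+\partial_y\bar\rho)\L$ dissipative, is the delicate point, and it is what pins down the admissible threshold on $s$.
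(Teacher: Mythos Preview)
Your strategy is correct and shares the paper's two structural insights: the decomposition $\rho=\bar\rho+\rho_{\neq}$ (in the paper's notation $\tilde\rho+\bar\rho$) together with the absorption of the drift term $u_2\partial_y\bar\rho$ into a modified dissipation $(1+\partial_y\bar\rho)R_1^2$, and the ``regularity--for--decay'' trade exploiting $|k_1|\ge 1$ on $\mathrm{supp}\,\widehat{\rho_{\neq}}$. Where you diverge from the paper is in how the decay is actually extracted.

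The paper first proves a \emph{linear} semigroup estimate for the time--dependent operator $L=(1-G(y,t))R_1^2$ with $G$ small (Proposition~2.7), namely $\|e^{Lt}\rho_0\|_{H^8}\lesssim(1+t)^{-5/2}\|\rho_0\|_{H^{10}}$, obtained by an energy inequality $\partial_t\|\rho\|_{H^m}^2\le -\|R_1\rho\|_{H^m}^2$ followed by the Fourier--splitting choice $N=\sqrt{t+1}$ and an ODE lemma. It then treats the genuinely quadratic remainder $\overline{u\cdot\nabla\rho_{\neq}}$ via Duhamel against this semigroup. You instead run the Fourier--splitting / interpolation argument \emph{directly} on the nonlinear fluctuation equation, deriving a Riccati--type inequality for $\|\rho_{\neq}\|_{H^m}$ and absorbing the quadratic forcing because both factors decay. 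This is the same mechanism organised differently: your route is more compact and avoids isolating a time--dependent linear problem, while the paper's route cleanly separates the robustness of the perturbed semigroup (which is where the smallness of $\partial_y\bar\rho$ is spent) from the nonlinear bootstrap. Either closes at the same regularity budget; the paper's modular version makes the admissible threshold on $s$ slightly easier to read off.
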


\begin{rema}
We note here that $\Omega$ does not belong to $L^2(\mathbb{R}^2)$. However, if we perturb $\Omega$ by 
an $H^s$ function the perturbation will remain $H^s$ for all time (unless the solution blows up in finite time). 
Similarly, $\Omega$ is not periodic but we may perturb it by a periodic function and once more the perturbation will remain periodic. Note also that in the $\mathbb{R}^2$ case, $\rho$ itself will decay (though mildly); however, in the $\mathbb{T}^2$ case, $\rho$ will not decay.  Note that an easy consequence of  Theorem 1.4 is that the $x-$ derivatives of $\rho$ decay algecraically whereas the $y$ derivatives need not decay at all. Indeed, if we perturb the stationary solution by a function of $y$ only then there should be no decay! As it will be clear later, the proofs of Theorems 1.3 and 1.4 are actually fundamentally different. 
\end{rema}

\subsection{Comparison with other results}

The idea of taking a non-linear equation where global well-posedness is unknown and proving global well-posedness near a stationary solution of the equation is not new. However, it seems that operators which are partially dissipative are less studied than fully dissipative ones. 
Very recently, this was done for the MHD equation by Lin and Zhang \cite{LinZhang} and has also been done for complex fluids and in other contexts as well (\cite{CheminMasmoudi},\cite{Wu1}). 

In fact, the linear propagator $$\partial_t g =R(g)$$ was already present in the work \cite{LinZhang}. The authors only studied the problem on $\mathbb{R}^2$ and relied on methods similar to the ones in Section 3 of this paper, except that they chose to prove existence in anisotropic spaces due to the fact that $R$ is anisotropic.  This is also possible in our setting but we chose not to go that route to keep the exposition simple. Furthermore, we emphasize that for problems related to partial damping it is important to consider the problem on periodic domains since there is where partial damping is really different from coercive damping. Indeed, we will show that when the operator $R(g)$ is considered on $\mathbb{R}^2$ it is still "fully" damping but it is much weaker than damping by $-g$.

In the direction of global existence and uniqueness for supercritical active scalar equations, one notable result in the class of vortex-patch type solutions is that of Hmidi and Hassainia \cite{2014arXiv1402.6499H}. They prove global existence and uniqueness of a certain kind of ``periodically-rotating'' vortex patch
for a class of supercritical active scalar equations. This was subsequently improved to a broader class of models (including the SQG equation) by Cordoba et al. \cite{Cordoba}. 
In the class of weaker solutions, Isett and Vicol \cite{IsettVicol} were able to use convex integration to construct global weak solutions to the IPM equation which are of class $C^\frac{1}{9}_{t,x}.$ Unfortunately, as is established by Isett and Vicol, 
these solutions are highly non-unique. A slightly different class of results were recently attained by a number of authors (\cite{Muskat1},\cite{Muskat2},\cite{Muskat3}) on the Muskat problem which can be seen as the ``free boundary problem'' for the IPM equations. 

We close by mentioning that in \cite{IsettVicol},\cite{FRV}, and \cite{FRV2}, \cite{FGSV} the authors indicate a difference between active scalars where the operator relating the velocity field and the advected quantity ($u$ and $\rho$ in our case), has an even Fourier symbol or an odd Fourier symbol. We are taking advantage of the fact that, for the IPM equation, this symbol is even and one component of it has a sign. Such a thing can never happen if the symbol is odd;
however, in \cite{ElgindiWidmayer} the authors establish certain dispersive properties of equations of the form $f_t =R_1 (f)$ which can also act as a stabilizing force. It is possible that using this sort of dispersion, one can say something 
about stationary solutions for active scalar equations with odd symbol.   

\subsection{The ideas behind the proofs}

The proofs of Theorems 1.3 and 1.4 are of a very different nature. In the whole space case (Theorem 1.3), if we solve the linearized problem $\rho$ itself decays at the rate of $t^{-\frac{1}{4}}$, $u_1$ decays at a rate of $t^{-3/4}$, and $u_2$ decays at a rate of $t^{-5/4}$. It is somewhat inconcievable that such slow decay rates can control a \emph{general} quadratic non-linearity. However, it turns out that if we analyze the nonlinear term:

$$u\cdot\nabla\rho= (-R_1 R_2 \rho, R_1^2\rho)\cdot\nabla \rho,$$ then we will notice that each term of the nonlinearity contains \emph{two} x-derivatives which allows us to prove that the energy will be controlled so long as $\nabla u_2$ is controlled, and as stated above $u_2$ decays like $t^{-5/4}$ this is done in subsection 3.1. So we can control the energy so long as we can bootstrap a decay of $t^{-5/4}.$ Bootstrapping the decay of $u_2$ is non-trivial due to the fact that $\rho$ itself decays very slowly. Nevertheless, in subsections 3.2 and 3.3 we are able to prove (with a great loss of derivatives) that so long as $\rho$ is bounded in a high energy space $u_2$ decays like $t^{-5/4}$.

\vspace{5mm}

The result on the torus (Theorem 1.4) is very different for the main reason that $\rho$ itself does not decay. In fact, as is given in the appendix, there are very simple active scalar equations (with a different nonlinearity from ours) for which the linearized equation is not strong enough to stop the formation of finite time singularities. Hence, it is necessary to use some of the finer properties of the non-linearity to deduce long-time stability.
The main difficulty in proving the stability on the torus is that the linearized equation has a large set of stationary modes. On the other hand, the linearized equation gives very good decay properties for the velocity field $u$ \emph{so long as} we are willing to lose derivatives (Proposition 2.7). A loss of derivatives in the linearized decay estimate along with the fact that there are non-decaying modes in the equation makes it nearly impossible to propagate decay unless we are willing to work in super-smooth spaces (Gevrey-Sobolev spaces), and even then unless there are suitable cancellations in the non-linearity we may lose stability (see appendix). Indeed, the non-decaying mode will actually introduce a \emph{new} linear term into the equation which could, potentially, change the decay properties of the linearized problem.  We get around this problem by proving that the decay properties of the linear semi-group $e^{R_1^2t}$ are actually stable in a certain sense.

More specifically, if we consider the solution $\rho(x,y,t)$, we break $\rho$ up:
$$\rho=\tilde\rho+\bar\rho$$ with $\tilde\rho$ being the non-decaying part of $\rho$ and $\bar\rho$ being the decaying part of $\rho$. Then we consider the nonlinearity:
$$u\cdot\nabla\rho=(\tilde u + \bar u)\cdot\nabla (\tilde\rho+\bar\rho)= \tilde u \cdot\nabla \tilde \rho + \bar u\cdot\nabla \tilde \rho+\tilde u\cdot\nabla \bar\rho +\bar u\cdot\nabla \bar \rho. $$
Now, it happens that $\tilde u\equiv 0$. Hence the non-linearity collapses into:
$$u\cdot\nabla \rho = \bar u \cdot\nabla\tilde\rho + \bar u \cdot\nabla \bar\rho.$$

Since $\bar\rho$ is decaying, the term $\bar u\cdot\nabla\bar\rho$ should be very small and should be controllable. The term $\bar u \cdot\nabla\tilde\rho$, however, acts like a second linear operator since $\tilde\rho$ is not decaying. It is conceivable that in some problems, this extra linear operator could compete with the damping coming from the linear term. By showing that, as we stated above, the decay mechanism is "stable" with respect to the sort of perturbations which this second linear operator introduces, we are able to keep the decay mechanism and close a decay estimate for $\bar\rho$ and show that $\tilde\rho$, while not decaying, converges as $t\rightarrow\infty$.

\subsection{Organization of the Paper}

In the next section we will study some properties of the linear equation $$\partial_t\rho=R\rho$$ on $\mathbb{R}^2$.  
In Section 3 we will prove Theorem 1.3. In sections 4 and 5 we will prove the linear results and non-linear results (Theorem 1.4) on the torus. 

\section{Linearized equation and Linearized Decay on $\mathbb{R}^2$}

In this section we prove $L^2$ decay estimates for the linear equation:

$$\partial_t \rho = R_1^2 \rho.$$

By taking the Fourier transform, we see:   $$\hat\rho(t,\xi)=e^{- \frac{\xi_1^2}{\xi_1^2+\xi_2^2}t}\hat\rho_0 (\xi_1,\xi_2).$$
Thus, $$|\rho|_{L^2}^2 = |\hat\rho|_{L^2}^2 = \int_{\mathbb{R}^2}e^{- \frac{\xi_1^2}{\xi_1^2+\xi_2^2}t}|\hat\rho_0 (\xi_1,\xi_2)|^2d\xi_1d\xi_2.$$

Now let's transform this integral into an integral in polar coordinates.

Then we have:

$$|\hat\rho|_{L^2} = \int_{\mathbb{R}^2}e^{- \frac{\xi_1^2}{\xi_1^2+\xi_2^2}t}|\hat\rho_0 (\xi_1,\xi_2)|^2d\xi_1d\xi_2=\int_{0}^{2\pi}\int_{0}^\infty e^{-cos^2(\theta)t} |\hat\rho_0(\theta,r)|^2 rdr d\theta$$
$$=\int_0^{2\pi} e^{-cos^2(\theta)t} G_0(\theta) d\theta.$$

We now use the following calculus lemma.

\begin{lemm}
$$\int_{0}^{2\pi} |cos(\theta)|^{k} e^{-cos^2(\theta)t}d\theta \approx c_k t^{-(1+k)/2}\,\,\,\, \text{as}\,\,\,\, t\rightarrow \infty$$

\end{lemm}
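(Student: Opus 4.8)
The statement is a standard instance of the Laplace method: the mass of the integral concentrates, as $t\to\infty$, near the points where $\cos^2\theta$ vanishes. The plan is to (i) reduce by periodicity to a single such point, (ii) discard the region away from it at an exponentially small cost, and (iii) rescale by $\sqrt t$ near the point and pass to the limit by dominated convergence.

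First I would substitute $\theta=\tfrac{\pi}{2}+s$, so that $\cos\theta=-\sin s$ and, using that $|\sin s|^k e^{-t\sin^2 s}$ has period $\pi$,
\[
\int_0^{2\pi}|\cos\theta|^k e^{-t\cos^2\theta}\,d\theta
=\int_{-\pi/2}^{3\pi/2}|\sin s|^k e^{-t\sin^2 s}\,ds
=2\int_{-\pi/2}^{\pi/2}|\sin s|^k e^{-t\sin^2 s}\,ds .
\]
On $[-\tfrac{\pi}{2},\tfrac{\pi}{2}]$ the function $\sin^2 s$ has a unique zero, at $s=0$, which is interior. Fix a small $\delta\in(0,\tfrac{\pi}{2})$ and split the last integral into $|s|<\delta$ and $\delta\le|s|\le\tfrac{\pi}{2}$. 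On the outer piece $\sin^2 s\ge\sin^2\delta=:c_\delta>0$, so that part is bounded by $\pi e^{-c_\delta t}$, which is $o\big(t^{-(1+k)/2}\big)$ and hence negligible.

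For the inner piece I would rescale $u=s\sqrt t$:
\[
\int_{|s|<\delta}|\sin s|^k e^{-t\sin^2 s}\,ds
= t^{-(1+k)/2}\int_{|u|<\delta\sqrt t}\bigl|\sqrt t\,\sin(u/\sqrt t)\bigr|^k\, e^{-t\sin^2(u/\sqrt t)}\,du .
\]
For each fixed $u$ the integrand converges pointwise to $|u|^k e^{-u^2}$ as $t\to\infty$, and on $[-\tfrac{\pi}{2},\tfrac{\pi}{2}]$ one has $\tfrac{2}{\pi}|s|\le|\sin s|\le|s|$ (Jordan's inequality), so the integrand is bounded, uniformly in $t\ge 1$, by $|u|^k e^{-(4/\pi^2)u^2}\in L^1(\mathbb{R})$. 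Dominated convergence then gives that the $u$-integral tends to $\int_{\mathbb{R}}|u|^k e^{-u^2}\,du=\Gamma\!\big(\tfrac{k+1}{2}\big)$. Collecting the two pieces yields
\[
\int_0^{2\pi}|\cos\theta|^k e^{-t\cos^2\theta}\,d\theta
= 2\,\Gamma\!\Big(\tfrac{k+1}{2}\Big)\,t^{-(1+k)/2}\,\bigl(1+o(1)\bigr),
\]
which is exactly the claimed asymptotic, with $c_k=2\,\Gamma\!\big(\tfrac{k+1}{2}\big)$.

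There is no serious obstacle here; the only points requiring a little care are that the contribution of the region $|s|\ge\delta$ is genuinely of lower order (it is exponentially small, so this is automatic) and that the rescaled integrand admits a $t$-independent integrable majorant so the limit may be taken inside the integral — both handled by the elementary bound $\tfrac{2}{\pi}|s|\le|\sin s|\le|s|$ on $[-\tfrac{\pi}{2},\tfrac{\pi}{2}]$. If one prefers a two-sided estimate valid for all large $t$ rather than just an asymptotic equivalence (which is all that is used in the sequel), the same splitting produces explicit constants $0<c_k'\le c_k''$ with $c_k'\,t^{-(1+k)/2}\le \int_0^{2\pi}|\cos\theta|^k e^{-t\cos^2\theta}\,d\theta\le c_k''\,t^{-(1+k)/2}$ for all $t\ge 1$.
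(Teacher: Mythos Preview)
Your proof is correct and follows essentially the same Laplace-method approach that the paper sketches: localize near the zeros of $\cos^2\theta$ (which you reduce to a single zero of $\sin^2 s$ via the substitution $\theta=\tfrac{\pi}{2}+s$ and periodicity), discard the complement at exponential cost, and rescale by $\sqrt{t}$ to extract the power $t^{-(1+k)/2}$. Your version is considerably more careful than the paper's two-sentence sketch --- you supply the integrable majorant via Jordan's inequality and even identify the constant $c_k=2\,\Gamma\!\big(\tfrac{k+1}{2}\big)$ --- but the underlying idea is the same.
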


The proof of this lemma is simple and basically comes down to localizing to the region where $|cos(\theta)|$ is small which consists of  the region $|\theta-\frac{\pi}{2}|<<1$ and the region $|\theta-\frac{3\pi}{2}|<<1$. A suitable transformation yields that the integral above is approximately $$\int_{-1}^{1} |x|^k e^{-x^2 t} dx\approx \frac{c_k}{\sqrt{t^{k+1}}}$$ for some constant $c_k$. 

Now, with Lemma 2.3 at hand, we see that $$|\hat\rho|_{L^2} \lesssim \frac{1}{\sqrt{t}} |G_0|_{L^\infty([0,2\pi])}.$$

On the other hand, $$G_0(\theta) = \int_0^\infty |\hat{\rho}(\theta,r)|^2rdr.$$

Assuming $\rho_0 \in W^{1+\delta,1}$ we have that:

$$|\hat{\rho_0}(\theta,r)|\leq \frac{|\rho_0|_{W^{1+\delta}}}{r^{1+\delta}+1}.\,\,\, $$ 

Hence, $$|G|_{L^\infty} \lesssim |\rho_0|_{W^{1+\delta,1}}^2$$

Thus we arrive that the following proposition:

\begin{prop}
Let $\rho_0 \in W^{1+\delta,1}$ for some $\delta>0$. 
If

$$\hat\rho(t,\xi)=e^{- \frac{\xi_1^2}{\xi_1^2+\xi_2^2}t}\hat\rho_0 (\xi_1,\xi_2),$$

then, 

\begin{equation} |\rho|_{L^2} \lesssim \frac{|\rho_0|_{W^{1+\delta,1}}}{(t+1)^{1/4}}.\end{equation}

Moreover, \begin{equation} |R_1\rho|_{L^2} \lesssim \frac{|\rho_0|_{W^{1+\delta,1}}}{(t+1)^{3/4}},\end{equation}

and 

\begin{equation} |R_1^2 \rho|_{L^2} \lesssim \frac{|\rho_0|_{W^{1+\delta,1}}}{(t+1)^{5/4}}.\end{equation}

\end{prop}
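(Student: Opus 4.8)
The plan is to prove the three decay estimates in sequence, all by the same polar-coordinates mechanism that was already used to establish $|\rho|_{L^2}\lesssim |\rho_0|_{W^{1+\delta,1}}(t+1)^{-1/4}$; the only new ingredient is keeping track of the extra powers of $|\cos\theta|$ that the Riesz operators $R_1$ and $R_1^2$ contribute on the Fourier side. Concretely, write $\widehat{R_1\rho}(t,\xi) = -i\frac{\xi_1}{|\xi|}e^{-\frac{\xi_1^2}{|\xi|^2}t}\widehat{\rho_0}(\xi)$ and $\widehat{R_1^2\rho}(t,\xi) = -\frac{\xi_1^2}{|\xi|^2}e^{-\frac{\xi_1^2}{|\xi|^2}t}\widehat{\rho_0}(\xi)$. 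Passing to polar coordinates $\xi=(r\cos\theta,r\sin\theta)$, the symbol $\xi_1/|\xi|$ becomes exactly $\cos\theta$ and is independent of $r$, so by Plancherel,
\[
|R_1\rho|_{L^2}^2 = \int_0^{2\pi}\cos^2(\theta)\,e^{-\cos^2(\theta)t}\,G_0(\theta)\,d\theta,
\qquad
|R_1^2\rho|_{L^2}^2 = \int_0^{2\pi}\cos^4(\theta)\,e^{-\cos^2(\theta)t}\,G_0(\theta)\,d\theta,
\]
where $G_0(\theta)=\int_0^\infty|\widehat{\rho_0}(\theta,r)|^2\,r\,dr$ is precisely the angular profile already introduced in the $L^2$ computation.

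Next I would invoke Lemma~2.3 with $k=2$ and $k=4$: $\int_0^{2\pi}|\cos\theta|^k e^{-\cos^2(\theta)t}\,d\theta \approx c_k t^{-(1+k)/2}$, which gives $t^{-3/2}$ for $k=2$ and $t^{-5/2}$ for $k=4$. Bounding $G_0(\theta)$ by $|G_0|_{L^\infty([0,2\pi])}$ and using the decay estimate $|\widehat{\rho_0}(\theta,r)|\le |\rho_0|_{W^{1+\delta,1}}/(r^{1+\delta}+1)$ — which holds because $\rho_0\in W^{1+\delta,1}$ makes $\widehat{\rho_0}$ continuous with $|\xi|^{1+\delta}\widehat{\rho_0}$ bounded — yields $|G_0|_{L^\infty}\lesssim |\rho_0|_{W^{1+\delta,1}}^2$ (the $r$-integral $\int_0^\infty r\,dr/(r^{1+\delta}+1)^2$ converges precisely because $\delta>0$). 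Taking square roots then gives $|R_1\rho|_{L^2}\lesssim |\rho_0|_{W^{1+\delta,1}}\,t^{-3/4}$ and $|R_1^2\rho|_{L^2}\lesssim |\rho_0|_{W^{1+\delta,1}}\,t^{-5/4}$ for large $t$; the modification to $(t+1)^{-3/4}$ and $(t+1)^{-5/4}$ for all $t\ge 0$ is immediate since the operators $R_1$, $R_1^2$ are bounded on $L^2$, so the left sides are uniformly bounded by $|\rho_0|_{L^2}\lesssim |\rho_0|_{W^{1+\delta,1}}$ on any compact time interval.

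The only genuinely delicate point is the asymptotics in Lemma~2.3 and, more precisely, making sure the constant is uniform in $k$ for the finitely many values $k\in\{0,2,4\}$ we need — but since Lemma~2.3 is already stated and we only use three instances of it, this is not an obstacle here. I should note one subtlety: Lemma~2.3 is phrased as an asymptotic equivalence as $t\to\infty$, so strictly it gives the bound only for $t$ large; combining with the trivial $L^2$-boundedness of $R_1^k$ on a compact initial interval, as above, upgrades this to the claimed $(t+1)$ form. Thus the main work is essentially bookkeeping: matching each Riesz factor to its power of $\cos\theta$, feeding that power into Lemma~2.3, and checking the $r$-integral defining $G_0$ converges. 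No step presents a real difficulty beyond these routine verifications.
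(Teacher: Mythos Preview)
Your proposal is correct and follows essentially the same approach as the paper: the paper's argument (laid out in the discussion preceding the proposition) is precisely to pass to polar coordinates, introduce $G_0(\theta)=\int_0^\infty|\hat\rho_0(\theta,r)|^2\,r\,dr$, bound $|G_0|_{L^\infty}$ via the $W^{1+\delta,1}$ assumption, and then apply Lemma~2.3; the two additional estimates for $R_1\rho$ and $R_1^2\rho$ are obtained exactly as you describe, by tracking the extra factors of $\cos\theta$ and invoking Lemma~2.3 with $k=2$ and $k=4$. You are in fact slightly more careful than the paper in explicitly handling the passage from the $t\to\infty$ asymptotic to the uniform $(t+1)^{-\alpha}$ bound via $L^2$-boundedness of $R_1^k$ on compact time intervals.
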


Unfortunately, estimates (2.1)-(2.3) have losses of derivatives in them; it would be optimal to be able to get an estimate on $\rho$ in $L^2$ in a way which doesn't lose derivatives. 
We can, at the expense of time decay, prove the following:

\begin{prop} 

Let $\rho_0\in L^2.$

If $\hat\rho(\xi,t)= e^{-\frac{\xi_1^2}{|\xi|^2}t}\hat\rho_0(\xi),$ then, 

\begin{equation} |\rho|_{L^2} \leq |\rho_0|_{L^2} \end{equation} 

\begin{equation} |R_1\rho|_{L^2} \leq \frac{1}{\sqrt{t+1}}|\rho_0|_{L^2}\end{equation} 

\begin{equation} |R_1^2\rho|_{L^2} \leq \frac{1}{{t+1}}|\rho_0|_{L^2} \end{equation}

\end{prop}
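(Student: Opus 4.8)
The plan is to work entirely on the Fourier side, exactly as in the proof of Proposition 2.5, but this time keeping the full factor $|\hat\rho_0(\xi)|^2$ under the integral rather than bounding $G_0$ in $L^\infty$. Writing $\hat\rho(t,\xi)=e^{-\xi_1^2t/|\xi|^2}\hat\rho_0(\xi)$, Plancherel gives $|\rho|_{L^2}^2=\int_{\mathbb{R}^2}e^{-\xi_1^2 t/|\xi|^2}|\hat\rho_0(\xi)|^2\,d\xi$, and since the multiplier $e^{-\xi_1^2t/|\xi|^2}$ is bounded by $1$ pointwise, estimate (2.7) is immediate. For (2.8) and (2.9), note that $\widehat{R_1\rho}(t,\xi)=-i(\xi_1/|\xi|)e^{-\xi_1^2t/|\xi|^2}\hat\rho_0(\xi)$ and $\widehat{R_1^2\rho}(t,\xi)=-(\xi_1^2/|\xi|^2)e^{-\xi_1^2t/|\xi|^2}\hat\rho_0(\xi)$, so the task reduces to the scalar bounds
\begin{equation}
\sup_{s\in[0,1]} s\, e^{-s t}\leq \frac{1}{t+1},\qquad \sup_{s\in[0,1]} s^2 e^{-s t}\leq \frac{C}{(t+1)^2},
\end{equation}
applied with $s=\xi_1^2/|\xi|^2\in[0,1]$; multiplying the resulting pointwise multiplier bound by $|\hat\rho_0(\xi)|^2$ and integrating then yields (2.8) with $1/(t+1)$ in place of $1/\sqrt{t+1}$, which is stronger, and (2.9) as stated (after taking square roots and adjusting constants; if one insists on clean constants one uses $\sup_{s\ge0} se^{-st}=1/(et)$ and handles small $t$ separately to get the $t+1$ normalization).

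The elementary fact behind (1) is that $\max_{s\ge 0} s^k e^{-st} = (k/(et))^k$ for $k\ge 1$, attained at $s=k/t$; when $k/t\le 1$ this max is realized inside $[0,1]$, and when $k/t>1$ (i.e. small $t$) the function $s\mapsto s^k e^{-st}$ is increasing on $[0,1]$ so its max there is $e^{-t}\le 1$. Combining the two regimes and comparing with $(t+1)^{-k}$ gives a uniform constant $C_k$ with $\sup_{s\in[0,1]} s^k e^{-st}\le C_k (t+1)^{-k}$ for all $t\ge 0$. Taking $k=1$ and $k=2$ gives precisely what is needed for $R_1\rho$ and $R_1^2\rho$.

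The main (and only) subtlety is bookkeeping of the time normalization: the naive estimates produce decay $\sim t^{-1}$ and $\sim t^{-2}$ for large $t$ but blow up nowhere, so one must absorb the $t\to 0$ behavior into the $(t+1)$-form; this is the routine two-regime argument above and is not a genuine obstacle. There is no loss of derivatives here — that is the whole point of the proposition, in contrast with Proposition 2.5 — because we never integrate by parts in $r$ or invoke the decay of $\hat\rho_0$ in the radial variable; we only use $\rho_0\in L^2$ and the pointwise boundedness of the Fourier multipliers. I would also remark, as the text following the statement will presumably exploit, that the same computation shows $|R_1\rho|_{L^2}^2 + t\,|R_1^2\rho|_{L^2}^2 \lesssim |\rho_0|_{L^2}^2$ and more generally interpolates between (2.7) and (2.9), which is the form most useful for the nonlinear bootstrap.
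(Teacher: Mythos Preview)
Your approach is essentially identical to the paper's: work on the Fourier side via Plancherel and reduce to the pointwise multiplier bound $|A|^k e^{-A^2 t}\le C_k (t+1)^{-k/2}$ for $|A|\le 1$, applied with $A=\xi_1/|\xi|$ (your $s=A^2$). There is one bookkeeping slip: you claim the first scalar bound yields ``(2.8) with $1/(t+1)$ in place of $1/\sqrt{t+1}$,'' but the multiplier of $\widehat{R_1\rho}$ is $\sqrt{s}\,e^{-st}$, not $s\,e^{-st}$, so after squaring, integrating, and taking the square root you recover exactly $1/\sqrt{t+1}$, not the stronger rate; the estimate (2.8) is in fact sharp, as Proposition~2.6 shows.
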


\begin{proof}

The proofs of (2.7)-(2.9) are a direct consequence of the following pointwise inequality:

\begin{equation} \Big|e^{-A^2 t} A^k\Big| \leq \frac{C_k}{(t+1)^{k/2}}\end{equation} for $|A|\leq 1.$

We will prove (2.8) only as (2.6)-(2.7) are similar: $$|R_1\rho|_{L^2}^2= \int e^{-\frac{\xi_1^2}{|\xi|^2}t} \Big |\frac{\xi_1}{|\xi|}\Big|^4 |\hat{\rho}(\xi_1,\xi_2)|^2 d\xi_1 d\xi_2.$$

Now (2.9) follows from inequality (2.10) applied with $A=\frac{\xi_1}{|\xi|}.$

\end{proof}

We will need both Proposition 2.2 and Proposition 2.3 to prove Theorem 1.3.

\subsection{A Basic Lemma}

The following basic lemma will be needed throughout the paper. 

\begin{lemm}

Let $\delta>0$ and $\eta>0$.

Then,

\begin{equation} \int_{0}^{t}\frac{ds}{(t-s+1)^{\delta}(s+1)^{1+\eta}} \leq \frac{C_{\eta,\delta}}{(t+1)^{\delta}} \end{equation} 

\end{lemm}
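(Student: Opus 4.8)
The plan is the standard dyadic-in-time splitting: write $\int_0^t = \int_0^{t/2}+\int_{t/2}^t$ and estimate each half by freezing whichever of the two factors is comparable to its value at the far endpoint of that subinterval. For bounded $t$ the estimate is trivial, since for $s\in[0,t]$ both $t-s+1$ and $s+1$ are $\ge 1$, so the integrand is $\le 1$ and the integral is $\le t$; hence I may assume $t$ large and absorb small $t$ into $C_{\eta,\delta}$.

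First I would handle $\int_0^{t/2}$. For $s\in[0,t/2]$ one has $t-s+1\ge t/2+1\ge\tfrac12(t+1)$, so $(t-s+1)^{-\delta}\le 2^{\delta}(t+1)^{-\delta}$, and pulling this factor out,
$$\int_0^{t/2}\frac{ds}{(t-s+1)^{\delta}(s+1)^{1+\eta}}\;\le\;\frac{2^{\delta}}{(t+1)^{\delta}}\int_0^{\infty}\frac{ds}{(s+1)^{1+\eta}}\;=\;\frac{2^{\delta}}{\eta}\cdot\frac{1}{(t+1)^{\delta}},$$
which already has the desired form; this is the one place where $\eta>0$ is used, namely to make the $s$-integral converge. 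For $\int_{t/2}^t$ I would instead freeze the other factor: on $[t/2,t]$ we have $s+1\ge t/2+1\ge\tfrac12(t+1)$, so $(s+1)^{-1-\eta}\le 2^{1+\eta}(t+1)^{-1-\eta}$, and after the substitution $u=t-s$,
$$\int_{t/2}^{t}\frac{ds}{(t-s+1)^{\delta}(s+1)^{1+\eta}}\;\le\;\frac{2^{1+\eta}}{(t+1)^{1+\eta}}\int_0^{t/2}\frac{du}{(u+1)^{\delta}}.$$
The remaining one-dimensional integral is $\le C_\delta$ when $\delta>1$, $\le\log(1+t)$ when $\delta=1$, and $\le (1-\delta)^{-1}(1+t)^{1-\delta}$ when $\delta<1$. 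In the third case $(t+1)^{-1-\eta}(1+t)^{1-\delta}=(1+t)^{-\delta-\eta}\le(1+t)^{-\delta}$; in the second, $\log(1+t)\le C_\eta(1+t)^{\eta}$ absorbs the logarithm and again yields $(1+t)^{-1}=(1+t)^{-\delta}$; in the first, $(t+1)^{-1-\eta}\le(t+1)^{-\delta}$ using $\delta\le 1+\eta$. Adding the two halves gives (2.13).

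There is no genuine obstacle here; the only point that deserves care is the endpoint behaviour of $\int_0^{t/2}(u+1)^{-\delta}\,du$ near $s=t$, which for $\delta<1$ grows like $(1+t)^{1-\delta}$. That growth is exactly compensated by the surplus decay $(1+t)^{-\eta}$ that the factor $(s+1)^{-1-\eta}$ supplies on $[t/2,t]$ beyond the borderline rate $(1+t)^{-1}$, which is why the hypothesis $\eta>0$ is indispensable. (If $\delta>1+\eta$ the same computation produces the bound with $\delta$ replaced by the sharp exponent $\min(\delta,1+\eta)$; this regime is not needed in the applications that follow, where the lemma is always invoked with $\delta\le 1+\eta$.)
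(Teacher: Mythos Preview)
Your proof is correct and follows essentially the same route as the paper: split at $t/2$, freeze the slowly-varying factor on each half, and handle the remaining one-variable integral according to whether $\delta<1$, $\delta=1$, or $\delta>1$. Your parenthetical remark about the regime $\delta>1+\eta$ is in fact more careful than the paper's own argument, which silently assumes $\delta\le 1+\eta$ (the stated inequality genuinely fails otherwise, as you note).
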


\begin{proof} 

Case 1: $\delta\not=1$.

$$ \int_{0}^{t/2}\frac{ds}{(t-s+1)^{\delta}(s+1)^{1+\eta}} \leq \frac{1}{(t/2+1)^\delta}\int_0^{t/2} \frac{ds}{(s+1)^{1+\eta}}= \frac{1}{\eta(t/2+1)^\delta} $$

$$\int_{t/2}^{t}\frac{ds}{(t-s+1)^{\delta}(s+1)^{1+\eta}} \leq\frac{1}{(t/2+1)^{1+\eta}}\int_{t/2}^{t} \frac{ds}{(t-s+1)^{\delta}}$$
$$=\frac{1}{(t/2+1)^{1+\eta}}\int_0^{t/2} \frac{ds}{(s+1)^{\delta}}= \frac{1}{{1-\delta}}\frac{1}{(t/2+1)^{1+\eta}} ((t/2+1)^{1-\delta} -1) $$

$$\lesssim C_\delta \frac{1}{(t/2+1)^{\delta+\eta}}$$

This completes Case 1.

When $\delta=1$ we simply get:

$$\int_{t/2}^{t}\frac{ds}{(t-s+1)^{\delta}(s+1)^{1+\eta}} \leq \frac{Log(t/2+1))}{(t/2+1)^{1+\eta}} \leq \frac{c}{1+t}. $$

This concludes the proof of 2.11.

\end{proof}

\section{The proof of Theorem 1.1}

Our equation reads: \begin{equation}\partial_t \rho +u\cdot\nabla\rho=R_1^2\rho \end{equation}

The goal will be to prove

(A) Decay of $u$ with $u_2$ decaying in an integrable way if $\rho$ remains small. 

(B) Energy estimates which require only integrable decay of $u_2$ to show that $\rho$ remains small. 

We begin by proving special energy estimates which allow us to say that we can prove that the $H^s$ norms of $\rho$ remain small so long as the $\nabla u_2$ decays fast enough. Notice that in most cases with a transport equation, we need fast decay of the gradient the whole velocity field, $\nabla u,$ in order to control the equation. We will use a special structure of the nonlinearity to prove that only decay on $\nabla u_2$ is needed.

\subsection{Energy Estimates}

\begin{lemm} The following estimate holds for $s\geq 4.$

\begin{equation}
\partial_{t}|\rho|_{H^{s}}^2\leq C\Big(|\nabla u_2|_{L^\infty}|\rho|_{H^{s}}^2+|u|_{H^{s}}^2|\rho|_{H^{s}}\Big)-2|u|_{H^{s}}^2.
\end{equation}

\end{lemm}

\begin{rema}
A consequence of Lemma 3.1 is that if we have good (integrable) time decay on $u_2$ and its gradient, then we will be able to prove that $|\rho|_{H^{s}}$ remains uniformly bounded by $2\epsilon$ so long as it starts out of size $\epsilon$ small enough.  We will indeed prove this in the following subsection. 

\end{rema}

\begin{proof}

The usual method of using the Kato-Ponce inequality will only give us $|\rho|_{H^s}^2|u|_{H^s}$ on the right hand side of the energy inequality. We will need to carry out the energy estimates carefully to ensure that we get the desired estimate. 

We first want to make the following simple observations:

(1) $|R_{1}\rho|_{H^s}=|u|_{H^s}.$

(2) $|\partial_{x}\rho|_{H^s}=|R_{1}\Lambda\rho|_{H^s}= |R_1\rho|_{H^{s+1}}=|u|_{H^{s+1}}$

We are interested in $H^s$ estimates so we will focus on first controlling the non-linear term $u\cdot\nabla\rho$.

\emph{Step 1: The non-linear term}

$$(\partial^s(u\cdot\nabla\rho),\partial^s\rho)=\sum_{i=1}^s a_{i,s}\Big(\partial^iu\cdot\nabla\partial^{s-i}\rho,\partial^s\rho\Big).$$

So we must study $$\Big(\partial^iu\cdot\nabla\partial^{s-i}\rho,\partial^s\rho\Big),$$ for $1\leq i\leq s.$

Using observation (1)-(2) above, it actually suffices to consider the term $$\Big(\partial^i\psi_{x}\partial^{s-i}\rho_y,\partial^s\rho\Big),$$ where $u=\nabla^\perp\psi$. Now, we want to distinguish between two kinds of terms, first the case where $i=1$ and then the case where $i\geq 2$. 

\vspace{3mm}
\emph{The case $i=1.$}
\vspace{3mm}

This means that we study $$(\partial u_2\partial^{s-1}\rho_y,\partial^s\rho).$$

This term is bounded by $|\nabla u_2|_{L^\infty} |\rho|_{H^s}^2$. 

\vspace{3mm}
\emph{The case $i\geq 2$.}
\vspace{3mm}

We will study $$(\partial^i\psi_x\partial^{s-i}\rho_y,\partial^s\rho).$$

Upon integrating by parts, we see:

$$(\partial^i\psi_x\partial^{s-i}\rho_y,\partial^s\rho)=-(\partial^{i}\psi\partial^{s-i}\rho_{xy},\partial^s\rho)-(\partial^i\psi\partial^{s-i}\rho_y,\partial^{s}\rho_x)=I+II.$$

Now, it is clear that $$|I|\lesssim |u|_{H^s}^2|\rho|_{H^s}.$$ Moreover, by writing:

$$II=(\partial(\partial^i\psi\partial^{s-i}\rho_y),\partial^{s-1}\rho_x)$$ and noting that $i\leq 2$ we see:

$$|II|\lesssim |u|_{H^s}^2|\rho|_{H^s}.$$

\end{proof}

\subsection{Decay of $|\rho|_{H^5}$ and $|u_2|_{H^{10}}$}

We will now prove the following proposition:

\begin{prop}
Assume that $|\rho|_{H^{20}}\leq 4\epsilon$ on the interval $[0,T]$. Then,  \begin{equation} |\rho|_{H^5}\lesssim \frac{\epsilon}{(t+1)^{\frac{1}{4}}},\end{equation}

and

\begin{equation} |u_2|_{H^{10}}\lesssim \frac{\epsilon}{(t+1)},\end{equation}

for all $t\in [0,T]$. 

\end{prop}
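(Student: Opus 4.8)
The proof will be a continuity argument built on the Duhamel representation of the perturbation equation. With $f'\equiv 1$ and using that $v$ is divergence free, the perturbation solves $\partial_t\rho=R_1^2\rho-\div(v\rho)$, where $v=R_1R^\perp\rho$ and $u_2=R_1^2\rho$, so
$$\rho(t)=e^{R_1^2 t}\rho_0-\int_0^t e^{R_1^2(t-s)}\div\big(v(s)\rho(s)\big)\,ds.$$
I would run the bootstrap on
$$\mathcal N(T):=\sup_{0\le t\le T}\Big((1+t)^{1/4}|\rho(t)|_{H^5}+(1+t)\,|u_2(t)|_{H^{10}}\Big),$$
estimating $|\rho|_{H^5}$ and $|R_1^2\rho|_{H^{10}}$ by applying the relevant operators to Duhamel's formula. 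The linear terms are handled by Proposition 2.4, whose derivative losses are harmless since $\rho_0\in X^s$ with $s\ge 20$: this gives $|e^{R_1^2 t}\rho_0|_{H^5}\lesssim\epsilon(1+t)^{-1/4}$, and, because $u_2$ carries \emph{two} factors of $R_1$, the same computation behind Lemma 2.3 yields $|R_1^2 e^{R_1^2 t}\rho_0|_{H^{10}}\lesssim\epsilon(1+t)^{-5/4}$, which is comfortably better than (3.2)--(3.3) demand.

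The nonlinear terms are where the structure of the IPM nonlinearity is used. Writing $\div(v\rho)=\partial_x(v_1\rho)+\partial_y(v_2\rho)$, two features are exploited: (i) the first piece carries a genuine $\partial_x=R_1\Lambda$, which commutes onto the semigroup, and each such extra factor of $R_1$ improves the time decay of $e^{R_1^2(t-s)}$ (by $(1+t-s)^{-1/2}$ via Proposition 2.4, or a weaker version via Proposition 2.5 when fewer derivatives can be afforded); (ii) in the second piece $v_2=\pm R_1^2\rho=\pm u_2$, so this factor already carries the \emph{bootstrapped} decay $(1+s)^{-1}$. I would then estimate each term of the Duhamel integral with Kato--Ponce/Moser product inequalities, always placing the slowly-decaying factor (the high Sobolev norm of $\rho$, or of $u_2$) where it is controlled by the hypothesis $|\rho|_{H^{20}}\le 4\epsilon$ together with $\mathcal N(T)$ via interpolation, and the better-decaying factor in the low norm. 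The point of this allocation is that every term of the nonlinear contribution is then dominated by an integral $\int_0^t (1+t-s)^{-\delta}(1+s)^{-1-\eta}\,ds$ with $\eta>0$ and $\delta\ge \tfrac14$ (resp.\ $\delta\ge 1$) for the $|\rho|_{H^5}$ (resp.\ $|u_2|_{H^{10}}$) estimate, and Lemma 2.9 turns these into $C\epsilon\,\mathcal N(T)^2(1+t)^{-1/4}$ and $C\epsilon\,\mathcal N(T)^2(1+t)^{-1}$, the extra factor $\epsilon$ coming from the $H^{20}$ hypothesis. Verifying that the various factors really do supply a combined decay strictly beating the critical power $1$ in the $s$-integrand — after all the derivative losses incurred in trading $\partial_x$ for semigroup decay and in the product estimates — is exactly what dictates the threshold $s\ge 20$.

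Collecting the linear and nonlinear bounds gives $\mathcal N(T)\le C\epsilon+C\epsilon\,\mathcal N(T)+C\epsilon\,\mathcal N(T)^2$, which for $\epsilon$ small enough forces $\mathcal N(T)\le 2C\epsilon$ uniformly on $[0,T]$; this is precisely (3.2)--(3.3).

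I expect the genuine obstacle to be obtaining the \emph{full} $(1+t)^{-1}$ rate in (3.3). The dangerous term is $\partial_y(v_2\rho)=\partial_y(u_2\rho)$: it has no spare $\partial_x$ to convert into semigroup decay, and the $R_1^2$ standing in front of $e^{R_1^2(t-s)}$ only yields $(1+t-s)^{-1}$ (Proposition 2.5) or $(1+t-s)^{-5/4}$ (Proposition 2.4, at a further cost in derivatives). Pushing it through requires using that the factor $u_2$ in $u_2\rho$ is itself the bootstrapped quantity, hence carries $(1+s)^{-1}$, and then arranging the remaining Sobolev bookkeeping — high-norm factor via $|\rho|_{H^{20}}\le 4\epsilon$, its decay recovered only by interpolation against $\mathcal N$ — so that the $s$-integrand decays slightly faster than $(1+s)^{-1}$ and Lemma 2.9 applies with $\delta=1$; the interpolation exponents land on the convergent side only because the high-norm budget is large enough, which is the reason the statement is phrased with $H^{20}$. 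Once this worst term is controlled, the $|\rho|_{H^5}$ estimate and the linear bounds are comparatively routine.
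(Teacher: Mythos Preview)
Your overall scheme---Duhamel for the perturbation equation, bootstrap on $(1+t)^{1/4}|\rho|_{H^5}$ and $(1+t)|u_2|_{H^{10}}$, closing with Lemma~2.10---is exactly the paper's. You also handle the $\partial_y(v_2\rho)$ piece the way the paper does: $v_2=u_2$ carries the bootstrapped $(1+s)^{-1}$, and interpolating the accompanying $\rho$-factor between $H^5$ and $H^{20}$ squeezes out the extra $(1+s)^{-\eta}$ needed for Lemma~2.10. So the term you flag as ``dangerous'' is in fact fine.

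The genuine gap is in the \emph{other} half of the nonlinearity, $\partial_x(v_1\rho)$ with $v_1=-R_1R_2\rho$. Your plan is to write $\partial_x=R_1\Lambda$ and commute the single $R_1$ onto the semigroup, buying an extra $(t-s+1)^{-1/2}$. But what remains, $v_1\rho$, contains \emph{no} fast-decaying factor under Kato--Ponce: $|v_1|_{H^k}\le|\rho|_{H^k}$, so the best you extract from the integrand is a power of $|\rho|_{H^5}\lesssim(1+s)^{-1/4}$ (possibly squared, after interpolation), which is far from integrable. If you run the numbers---e.g.\ for the $|u_2|_{H^{10}}$ bound with Proposition~2.5, $(t-s+1)^{-3/2}|v_1\rho|_{H^{11}}\lesssim (t-s+1)^{-3/2}\epsilon^2(1+s)^{-2/5}$---the time integral only gives $(1+t)^{-2/5}$, not $(1+t)^{-1}$. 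No amount of interpolation between $H^5$ and $H^{20}$ fixes this, because the total decay budget in $s$ is at most $2\times\tfrac14=\tfrac12$.

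What the paper uses, and what is missing from your proposal, is the elementary identity
\[
|u_1|_{L^2}^2=\int (R_1R_2\rho)^2=\int R_2^2\rho\cdot R_1^2\rho\le |\rho|_{L^2}|u_2|_{L^2},
\]
and the analogous bound $|\partial_x\rho|_{H^k}\le|\rho|_{H^{k+1}}^{1/2}|u_2|_{H^{k+1}}^{1/2}$. This is how the ``second'' $x$-derivative hidden in $v_1$ is cashed in for decay: $u_1$ (and $\partial_x\rho$) decay like the geometric mean of $\rho$ and $u_2$. With it, the entire nonlinear term is bounded by $C\sqrt\epsilon\,|u_2|_{H^{10}}\sqrt{|\rho|_{H^5}}\lesssim \epsilon^{3/2}(1+s)^{-9/8}$, and Lemma~2.10 closes both bootstrap estimates directly---no commutation of $\partial_x$ onto the semigroup is needed at all.
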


The proof of (3.2) and (3.3) is somewhat delicate because there is a loss of derivatives in the decay estimate (2.1). 

\begin{proof}

Using Duhamel's formula, we can write:

$$\rho(t) = e^{R_1^2 t} \rho_0 + \int_0^t e^{R_1^2(t-s)} (u\cdot\nabla \rho)(s) ds$$

Using (2.4) we see that:

$$|\rho|_{H^5} \leq \frac{C\epsilon}{(t+1)^{1/4}}+\int_0^t \frac{C}{(t-s+1)^{1/4}} |u\cdot\nabla\rho|_{W^{7,1}}.$$

We will need the following estimate:

\emph{Claim:} 

$$|u\cdot\nabla\rho|_{W^{7,1}} \leq C\sqrt{\epsilon} |u_2|_{H^{10}}\sqrt{|\rho|_{H^5}}.$$

\emph{Proof of the Claim:}

$u\cdot \nabla\rho$ consists of two terms:

$$ |u\cdot\nabla\rho|_{W^{7,1}}\leq C |u_1|_{H^7}|\partial_x\rho|_{H^7}+ |u_2|_{H^7}|\partial_y\rho|_{H^7}$$

$$|u_1|_{L^2}^2 = \int R_1R_2\rho R_1R_2\rho=\int R_2R_2\rho R_1R_1\rho\leq |\rho|_{L^2} |R_1^2\rho|_{L^2}=|\rho|_{L^2} |u_2|_{L^2}$$

Hence, $$|u_1|_{H^7} \leq |\rho|_{H^7}^{1/2} |u_2|_{H^7}^{1/2}. $$

By the same token, $$|\partial_x\rho|_{H^7} \leq |\rho|_{H^8}^{1/2} |u_2|_{H^8}^{1/2}. $$

Hence, $$ |u\cdot\nabla\rho|_{W^{7,1}}\leq C |u_2|_{H^8}|\rho|_{H^8}$$

However, due to the well-known interpolation inequalities, $$|\rho|_{H^8}^2 \leq |\rho|_{H^5}|\rho|_{H^{11}}$$ which completes the proof of the claim since $|\rho|_{H^{11}}\leq 4\epsilon$ by assumption. 

Now with the claim at hand we see, using (2.1):

$$|\rho|_{H^5} \leq \frac{C\epsilon}{(t+1)^{1/4}}+\int_0^t \frac{C\sqrt{\epsilon}}{(t-s+1)^{1/4}} |u_2|_{H^{10}}(s) \sqrt{|\rho|_{H^5}(s)} ds.$$

Now let's estimate $|u_2|_{H^{10}}$ using estimate (2.6) which has no loss of derivatives, again, using the Duhamel formula:

$$|u_2|_{H^{10}} \leq \frac{C\epsilon}{(t+1)}+ \int_0^t \frac{|u\cdot\nabla\rho|_{H^{10}}}{(t-s+1)}$$

We will need the following estimate:

\emph{Claim:}

$$|u\cdot\nabla\rho|_{H^{10}} \leq C\sqrt{\epsilon}|u_2|_{H^{10}} \sqrt{|\rho|_{H^5}}$$

Indeed, as before, $$|u\cdot\nabla\rho|_{H^{10}} \leq C\Big (  |u_1|_{H^{10}} |\partial_x\rho|_{L^\infty}+|u_1|_{L^\infty} |\partial_x\rho|_{H^{10}}+|u_2|_{H^{10}} |\partial_y\rho|_{L^\infty}+|u_2|_{L^\infty} |\partial_y\rho|_{H^{10}}\Big) $$

However, $$|u_1|_{H^{10}}\leq |u_2|_{H^{10}}^{1/2}|\rho|_{H^{10}}^{1/2}\leq |u_2|_{H^{10}}^{1/2}\sqrt{\epsilon},$$
$$|\partial_x\rho|_{L^\infty} \leq |u_1|_{H^3}\leq  |u_2|_{H^3}^{1/2}|\rho|_{H^3}^{1/2},$$

$$|u_1|_{L^\infty}\leq |u_1|_{H^{2}}\leq |u_2|_{H^2}^{1/2} |\rho|_{H^2}^{1/2},$$
$$|\partial_x\rho|_{H^{10}} \leq |u_1|_{H^{11}}\leq  |u_2|_{H^{10}}^{1/2}|\rho|_{H^{12}}^{1/2},$$

and

$$|\partial_y \rho|_{H^{10}}\leq |\rho|_{H^5}^{1/2} |\rho|_{H^{17}}^{1/2}.$$

This completes the proof of the claim.

\end{proof}

Hence, we have:

\begin{equation} |\rho|_{H^5} \leq \frac{C_{*}\epsilon}{(t+1)^{1/4}}+\int_0^t \frac{C\sqrt{\epsilon}}{(t-s+1)^{1/4}} |u_2|_{H^{10}}(s) \sqrt{|\rho|_{H^5}(s)} ds \end{equation}

and

\begin{equation} |u_2|_{H^{10}} \leq \frac{C_{*}\epsilon}{(t+1)}+\int_0^t \frac{C\sqrt{\epsilon}}{(t-s+1)} |u_2|_{H^{10}}(s) \sqrt{|\rho|_{H^5}(s)} ds\end{equation}

for some fixed $C_{*}.$

\vspace{4mm}

\emph{A Continuity Argument:}

\vspace{4mm}

Now, if we assume that $$|\rho|_{H^5}\leq 4\frac{C_{*}\epsilon}{(t+1)^{1/4}}$$ and  $$|u_2|_{H^5}\leq 4\frac{C_{*}\epsilon}{(t+1)}$$ on an interval $[0,T*]$ we will be able to apply inequality (2.8) to (3.5) and (3.6) to prove that, actually, $$|\rho|_{H^5}\leq 2\frac{C_{*}\epsilon}{(t+1)^{1/4}} $$ $$ |u_2|_{H^{10}}\leq  2\frac{C_{*}\epsilon}{(t+1)}$$ for all $t\in [0,T*]$ and,  by continuity, for all $t\in [0,T]$. This completes the proof of Proposition 3.3.

\subsection{Integrable Decay on $u_2$}

With Proposition 3.3 at hand, we can now proceed to prove that $u_2$ decays at an integrable rate. This will allow us to close the energy estimate (3.1) and finish the proof.

\begin{prop}

Let $\rho_0\in W^{5,1}$ with $|\rho_0|_{W^{5,1}}\leq \epsilon$ and assume that $|\rho|_{H^{20}}\leq 4\epsilon$ on [0,T].  Then, \begin{equation} |\nabla u_2|_{L^\infty} \lesssim \frac{\epsilon}{(1+t)^{5/4}},\end{equation}

for all $t\in [0,T]$.

\begin{proof}

Using linear estimate (2.3) and the Duhamel formula, we have:

$$|u_2|_{H^{2.5}} \lesssim \frac{\epsilon}{(t+1)^{5/4}} + \int_0^t \frac{|u\cdot\nabla\rho|_{W^{4,1}}(s)}{(t-s+1)^{5/4}}$$

$$\leq   \frac{\epsilon}{(t+1)^{5/4}} + \int_0^t \frac{ |u_1|_{H^4}(s)|\partial_x\rho|_{H^4}(s)+ |u_2|_{H^4}(s)|\partial_y|_{H^4}(s)}{(t-s+1)^{5/4}}ds$$

However, using Proposition 3.3, we have:

$$|u|_{H^{2.5}}\lesssim \frac{\epsilon}{(t+1)^{5/4}}+ \int_0^{t} \frac{\epsilon^2}{(t+s-1)^{5/4}(s+1)^{5/4}}ds.$$ Now we apply Lemma 2.6 and we have $$|u|_{H^{2.5}}\lesssim  \frac{\epsilon}{(t+1)^{5/4}}$$

\end{proof}

\end{prop}

\subsection{Finishing off the proof}

Using (3.7) and (3.1) we see that if $\epsilon$ is small enough, if we assume that $|\rho|_{H^{20}} \leq 4\epsilon$ on an interval of time $[0,T]$ while $|\rho_0|_{H^{20}}\leq \epsilon$, we actually have that $|\rho|_{H^{20}} \leq 2\epsilon$. This  implies that $|\rho|_{H^{20}}\leq 2\epsilon$ for all time and we are done.

\section{Asymptotic Stability on The Torus}

The proof of Theorem 1.4 is of a completely different nature when compared to the proof of Theorem 1.3. Indeed, in the $\mathbb{T}^2$ case, $\rho$ itself cannot decay. Indeed, if $\rho_0=g(y)$ then the solution is stationary. This causes a major difficulty in proving the global stability of $\rho(y)=y.$ Indeed we will see that there are two major difficulties:

(1) $\rho$ itself cannot decay.

(2) The linearized problem has (infinitely) many non-decaying modes. 

(3) Proving that $u$ decays fast enough requires a loss of derivatives. 

Note that a loss of derivatives in the decay estimate is not itself the problem-it is indeed the fact that $\rho$ itself cannot decay that makes derivative losses problematic. 

Indeed, as is outlined in the section on linear estimates below, it is possible to prove the following linear decay estimate:

\begin{equation} \big |e^{R_1^2t} R_1  \rho\big|_{L^2} \lesssim \frac{1}{\sqrt{t+1}} |\rho|_{L^2}\end{equation}

and 
 
\begin{equation} \big |e^{R_1^2t} R_1^2  \rho\big|_{L^2} \lesssim \frac{1}{{t+1}} |\rho|_{L^2}\end{equation}

This would imply, on a linear level, a decay on the order of $t^{-1}$ for $u_2$ which, when coupled with the energy inequality (3.2) would give us a shot at proving almost global existence. 

On the other hand, if one allows for an arbitrarily weak derivative loss, we can get integrable decay on $u_2.$

$$\big |e^{R_1^2}t R_1^2  \rho\big|_{L^2} \lesssim \frac{1}{{(t+1)^{1+\epsilon}}} |\rho|_{H^\epsilon}.$$

This derivative loss would seem to require that we close our estimates in a "super-smooth" space such as a Sobolev-Gevrey space (again, this is because $\rho$ will not decay). We, however, desire to prove a global stability result in Sobolev spaces. 

\subsection{Overcoming difficulties (1) and (2)}
Let's introduce some notation. Let $f:\mathbb{T}^2\rightarrow\mathbb{R}$ be a $C^1$ function. We define:

$$\tilde{f}:= \dashint_{-\pi}^{\pi} f(x,y)dx,$$
$$\bar{f}:= f-\tilde{f}.$$

Notice that $\tilde{f}$ is always a function of $y$ only. 

It is expected that $\bar{\rho}$ will decay and $\tilde{\rho}$ will just remain bounded. 

Now, $$\partial_t\rho+u\cdot\nabla\rho=R_1^2\rho.$$

Notice that $R_1^2\rho=R_1^2\bar\rho$ and $\bar u =u$. 

So we can write:

$$\partial_t \bar \rho +\overline{u\cdot\nabla\rho} =R_1^2\bar\rho$$

But $$\overline{u\cdot\nabla\rho}=\overline{u\cdot\nabla\bar\rho}+ \overline{u\cdot\nabla\tilde\rho}=\overline{u\cdot\nabla\bar\rho}+ \overline{u_2\partial_y\tilde\rho} = \overline{u\cdot\nabla\bar\rho}+ u_2\partial_y\tilde\rho $$
where the third equality is due to the fact that $\tilde\rho$ is a function of $y$ only and the fourth equality is due to the fact that $u_2 =\bar u_2$ and $\tilde\rho$ is a function of $y$ only. 

Hence, the equation for $\bar\rho$ reads:

$$ \partial_t \bar \rho +\overline{u\cdot\nabla\bar\rho}+ u_2\partial_y\tilde\rho =R_1^2\bar\rho. $$

But since $u_2=R_1^2\rho$ we have:

\begin{equation}  \partial_t \bar \rho +\overline{u\cdot\nabla\bar\rho} =R_1^2\bar\rho (1-\partial_y\tilde\rho) \end{equation}
and also the equation for $\tilde\rho:$

\begin{equation} \partial_t\tilde\rho + \widetilde{\partial_y(u_2\bar\rho)}=0\end{equation}

Our scheme for solving the problem will be as follows:

(A) Assume that $|\rho|_{H^{100}} \leq 4\epsilon.$

(B) Prove decay estimates on $$\partial_t f =(R_1^2 f) (1-G)$$ for a general small smooth function $G$. This may be called stability of the decay mechanism. 

(C) Bootstrap. 

\subsection{Estimates for the linearized problems on $\mathbb{T}^2$}

Our goal is to prove that if $\rho$ satisfies the linear problem:

$$\partial_t  \rho = R_1^2 \rho,$$ then $u=R_1 R^\perp \rho$ decays in time.  Using Fourier series, we may solve this equation exactly as:

$$\hat{\rho}(t,n) =e^{-\frac{n_1^2}{n_1^2+n_2^2}t} \hat{\rho_0}(n)$$

Now, it is clear that when $n_1=0$ there is no decay. However, the operator $R_1$ kills terms with $n_1=0$. Indeed,

$$R_1\hat{\rho}(t,n)=-i\frac{n_1}{|n|}e^{-\frac{n_1^2}{n_1^2+n_2^2}t} \hat{\rho_0}(n).$$

Hence, by Plancharel's theorem: $$|u|_{L^2}=|R_1 \rho|_{L^2}\leq \sum_{n_1\not=0}e^{-\frac{n_1^2}{n_1^2+n_2^2}t} |\hat{\rho_0}(n)|^2\leq \sum_{n_1\not=0}e^{-\frac{1}{n_1^2+n_2^2}t} |\hat{\rho_0}(n)|^2$$
Now we write:

$$\sum_{n_1\not=0}e^{-\frac{1}{n_1^2+n_2^2}t} |\hat{\rho_0}(n)|^2\leq e^{-\frac{t}{K^2}}|\rho_0|_{L^2} +\sum_{|n|\geq K}|\hat{\rho}_0(n)|^2$$

$$\leq e^{-\frac{t}{K^2}} |\rho_0|_{L^2}+ K^{-2s}|\rho_0|_{H^s}^2$$

Now, we want $u$ to decay, say in $H^{2+}$, like $t^{-2-\delta}$ for some $\delta>0$. This will be achieved if we take $s>2$ and $K=t^{0.5-\epsilon}$ for some $\epsilon>0$. Therefore, for each $\epsilon>0$, \begin{equation} |u|_{H^{2+\epsilon}}\leq \frac{1}{t^{2+\delta}} |\rho_0|_{H^{4+2\epsilon}}.\end{equation}

\subsection{Linear decay on the Torus in more generality}

Consider the following linear equation 

\begin{equation} \partial_t \rho=R_1^2\rho (1-G(y,t)). \end{equation}

We wish to prove linear decay estimates for this equation assuming that $G$ is sufficiently small and the initial data $\rho_0$ is such that $\tilde\rho_0:= \dashint \rho_0(x,y)dx\equiv 0.$

\begin{prop}
There exists $\delta>0$ such that if $|G|_{W^{11,\infty}}\leq \delta$ for all time, then, if $\rho(t)$ denotes the solution of equation (2.11) with initial data such that $\tilde\rho_0\equiv 0$, then 

\begin{equation} |\rho(t)|_{H^8} \lesssim \frac{|\rho_0|_{H^{10}}}{(1+t)^{5/2}},\,\,\,\, \forall\,\,\, t\geq 0. \end{equation}

\end{prop}

\begin{rema} The proof of this proposition is not as trivial as its counterpart when $G\equiv 0$. Indeed, because of the presence of the term $G(y,t)$ in (2.11), we cannot extract an exact formula for the solution because the $G$ term mixes the effect of all the Fourier coefficients while the operator $R_1^2$ is a Fourier multiplier. 
\end{rema}

\begin{proof}

First note $$\partial_t \int_{-\pi}^{\pi}  \rho(t,x,y)dx= \int_{-\pi}^{\pi} R_{1}^2 \rho (1-G(y))dy= \int_{-\pi}^{\pi} \partial_x(-\Delta)^{-1}\partial_x\rho(1-G(y))dx\equiv 0,$$ hence, if $\tilde\rho_0\equiv 0$ then $\tilde\rho(t)\equiv 0$.

Upon multiplying (2.11) by $\rho$ and integrating we see:

$$\partial_t |\rho|_{L^2}^2 = 2 \int R_1^2\rho (1-G) \rho.$$

Since $\tilde\rho\equiv 0$ we can write $\rho=\Delta \psi$. And we get:

$$\partial_t |\rho|_{L^2}^2 = -2 \int \partial_{xx}\psi (1-G) \Delta\psi=2 \int \partial_{x}\psi (1-G) \partial_x\Delta\psi.$$

$$= -2\int \nabla\big(\partial_x\psi(1-G)\big) \cdot\nabla\partial_x\psi= -2 \int |\nabla\partial_x\psi|^2 (1-G)+ \int \partial_x\psi \nabla G \cdot\nabla\partial_x\psi.$$

Now, assuming that $|G|_{H^1}\leq\delta<<1$ and applying the Poincar\'e inequality we get:

$$\partial_t |\rho|_{L^2}^2\leq -\int |\nabla\partial_x\psi|^2=-\int |R_1\psi|^2.$$

This yields that $|\rho|_{L^2}$ is bounded by its initial data. 

In fact, due to the fact that the Laplacian has discrete spectrum on $\mathbb{T}^2$ we can actually deduce that $\rho$ decays in $L^2$ so long as its higher derivatives are controlled. Indeed, 

$$\partial_t |\rho|_{L^2}^2\leq -\sum_{n,k} \frac{n^2}{n^2+k^2} |\rho_{n,k}|^2 \leq - \sum_{n,k} \frac{1}{n^2+k^2} |\rho_{n,k}|^2$$
$$\leq -\frac{1}{N} |\rho|_{L^2}^2+ \sum_{n^2+k^2>N}(\frac{1}{N}-\frac{1}{n^2+k^2})|\rho_{n,k}|^2\leq \frac{1}{N} \Big(-|\rho|_{L^2}^2+ \sum_{n^2+k^2>N}|\rho_{n,k}|^2\Big)$$

$$\leq -\frac{1}{N} |\rho|_{L^2}^2+ \frac{1}{N^5}\sum_{n^2+k^2>N} (n^2+k^2)^2 |\rho_{n,k}|^2\leq -\frac{1}{N} |\rho|_{L^2}^2+\frac{|\rho|_{H^2}}{N^5}.$$

Now take $N=\sqrt{t+1}$. 

This gives: 

$$\partial_t  |\rho|_{L^2}^2 \leq -\frac{|\rho|_{L^2}^2}{\sqrt{t+1}}  +\frac{|\rho|_{L^\infty_tH^2_x}^2}{(t+1)^{5/2}}.$$

\begin{lemm} 

Let $f$ be a positive $C^1$ function of $t$. 

Suppose that $$\partial_t f\leq -\frac{f}{\sqrt{t+1}}+ \frac{A}{(t+1)^{5/2}}$$ 
 
for some $A>0$.

Then,

$$f(t) \leq \frac{f(0)+A}{(t+1)^{5/2}}$$

\end{lemm}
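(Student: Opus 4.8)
The plan is to treat this as a standard Grönwall-type differential inequality, where the only subtlety is that the damping coefficient $1/\sqrt{t+1}$ is time-dependent but not integrable at infinity (its integral grows like $\sqrt{t}$), so the natural integrating factor is $\exp\big(\int_0^t (s+1)^{-1/2}\,ds\big) = \exp\big(2(\sqrt{t+1}-1)\big)$. First I would multiply the inequality $\partial_t f \leq -f/\sqrt{t+1} + A/(t+1)^{5/2}$ by this integrating factor $\mu(t):=e^{2\sqrt{t+1}}$ (dropping the harmless constant factor $e^{-2}$), obtaining $\partial_t\big(\mu(t) f(t)\big) \leq \mu(t) A (t+1)^{-5/2}$. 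Integrating from $0$ to $t$ then gives
$$\mu(t) f(t) \leq \mu(0) f(0) + A\int_0^t \frac{e^{2\sqrt{s+1}}}{(s+1)^{5/2}}\,ds.$$

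The crux of the argument is to bound the integral $\int_0^t e^{2\sqrt{s+1}}(s+1)^{-5/2}\,ds$ by something comparable to $e^{2\sqrt{t+1}}(t+1)^{-5/2}$, so that after dividing by $\mu(t)$ one recovers the claimed decay $f(t)\lesssim (f(0)+A)(t+1)^{-5/2}$. This is the main point to get right: the exponential growth of the integrand dominates, so the integral is concentrated near the upper endpoint $s=t$, and one expects $\int_0^t e^{2\sqrt{s+1}}(s+1)^{-5/2}\,ds \approx e^{2\sqrt{t+1}}(t+1)^{-5/2}\cdot(1+o(1))$ (the $\sqrt{s+1}$-derivative factor contributes a $\sqrt{t+1}$, which beats the polynomial, so in fact the bound is even a bit generous). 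The cleanest way to make this rigorous is the substitution $u=\sqrt{s+1}$, $du = \frac{1}{2}(s+1)^{-1/2}ds$, which turns the integral into $2\int_1^{\sqrt{t+1}} e^{2u} u^{-4}\,du$; then integrate by parts once (or simply bound $u^{-4}\leq 1$ on $[1,\sqrt{t+1}]$ and use $\int e^{2u}\,du = \frac12 e^{2u}$) to see it is $\leq e^{2\sqrt{t+1}}$, and then one still has to reinsert the polynomial decay. Actually the honest route is: write $2\int_1^{U} e^{2u}u^{-4}\,du$ with $U=\sqrt{t+1}$; integrating by parts, $= e^{2u}u^{-4}\big|_1^U + 4\int_1^U e^{2u}u^{-5}\,du \leq e^{2U}U^{-4} + \frac{4}{U}\cdot 2\int_1^U e^{2u}u^{-4}\,du$, so for $U\geq 8$ (say) the last term absorbs into the left side and one gets $2\int_1^U e^{2u}u^{-4}\,du \leq 2 e^{2U}U^{-4} = 2 e^{2\sqrt{t+1}}(t+1)^{-2}$, which is more than enough; for bounded $t$ the inequality is trivial by continuity and adjusting constants.

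Combining, $f(t) \leq e^{2-2\sqrt{t+1}} f(0) + C A (t+1)^{-2}$ for an absolute constant $C$; since $e^{2-2\sqrt{t+1}}$ decays faster than any polynomial, this is certainly $\leq (f(0)+A)(t+1)^{-5/2}$ up to an absolute constant, and since the statement as written has no explicit constant it suffices (alternatively, one tracks constants and notes $C$ can be normalized). I would remark that the lemma as stated is not sharp — the method actually yields decay of order $(t+1)^{-5/2}$ matching the forcing, and the $f(0)$ term decays super-polynomially — but the stated form is all that is needed downstream. The main obstacle, such as it is, is purely the endpoint analysis of the exponential integral; there is no PDE content here, and the substitution $u=\sqrt{s+1}$ followed by one integration by parts disposes of it.
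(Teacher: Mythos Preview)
Your overall strategy --- multiply by the integrating factor $e^{2\sqrt{t+1}}$ and then show that $\int_0^t e^{2\sqrt{s+1}}(s+1)^{-5/2}\,ds$ is controlled by $e^{2\sqrt{t+1}}$ times a decaying polynomial --- is exactly the paper's. The one concrete error is in your integration-by-parts absorption. After writing
\[
2\int_1^{U} e^{2u}u^{-4}\,du \;=\; e^{2U}U^{-4}-e^{2}\;+\;4\int_1^{U} e^{2u}u^{-5}\,du,
\]
you bound the last term by $\tfrac{4}{U}\cdot 2\int_1^{U} e^{2u}u^{-4}\,du$. That would require $u^{-5}\le \tfrac{2}{U}\,u^{-4}$, i.e.\ $u\ge U/2$, on all of $[1,U]$; on $[1,U/2)$ this fails, and in fact $u^{-1}\ge U^{-1}$ there gives $u^{-5}\ge U^{-1}u^{-4}$, so the inequality points the wrong way and nothing can be absorbed into the left side.

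The paper handles the integral by the standard split at $s=t/2$: on $[0,t/2]$ one has $e^{2(\sqrt{s+1}-\sqrt{t+1})}\le e^{-c\sqrt{t}}$, which decays super-polynomially; on $[t/2,t]$ one freezes $(s+1)^{-5/2}\le (t/2+1)^{-5/2}$ and bounds the remaining exponential integral directly via the change of variables $u=\sqrt{s+1}$. The same device works after your substitution: cut at $U/2$, use $u^{-4}\le 1$ on $[1,U/2]$ (giving a contribution $\lesssim e^{U}$, negligible against $e^{2U}U^{-4}$) and $u^{-4}\le (U/2)^{-4}$ on $[U/2,U]$ (giving $\lesssim U^{-4}e^{2U}$). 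With this repair your argument is complete and coincides with the paper's, including your correct remark that both proofs deliver the conclusion only up to an absolute multiplicative constant.
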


\begin{proof} 

$$\partial_t (e^{2\sqrt{t+1}}f) \leq \frac{Ae^{2\sqrt{t+1}}}{(t+1)^{5/2}}$$

So, $$f(t)\leq e^{-2\sqrt{t+1}}f(0)+ \int_{0}^t \frac{A e^{2(\sqrt{s+1}-\sqrt{t+1})}}{(s+1)^{5/2}}ds.$$

The Lemma follows after we split the integral into two pieces: from $0$ to $t$ and $t/2$ to $t$. The integral from $0$ to $t/2$ decays exponentially. The second part of the integral decays like $(t+1)^{-5/2}$ multiplied by the factor:

$$\int_{t/2}^{t} e^{2(\sqrt{s+1}-\sqrt{t+1})} ds=\int_0^{\sqrt{t+1}-\sqrt{t/2+1}}  2\tau e^{-\tau}d\tau<C.$$

This completes the proof of the Lemma.

\end{proof}

Now applying Lemma 2.8 we see that $$|\rho|_{L^2} \leq \frac{|\rho|_{L^\infty([0,t]; H^2)}}{(t+1)^{5/2}}.$$

The idea is then to show that $|\rho(t)|_{H^2}\leq |\rho_0|$ and then this would give (2.12) with $H^8$ replaced by $L^2$ and $H^{10}$ replaced by $L^2$. We won't show this step as it will be clear from the $H^8$ estimate. 

Now we wish to prove a similar decay estimate for the higher derivatives.

First we will prove $|e^{Lt} \rho|_{H^{10}} \leq |\rho_0|_{H^{10}}.$ Define $J:= (-\Delta+1).$ Indeed, 

$$\partial_t \frac{1}{2}|\rho|_{H^{10}}^2 = \sum_{|s|\leq 10}\int\partial^s\Big(R_1^2\rho(1-G) \Big)\partial^s\rho=\sum_{|s|\leq 10}\int \partial^s \Big(\partial_{xx} \psi(1-G)\Big) \Delta \partial^s\psi$$
with $\Delta\psi=\rho$ as above. 

$$\partial_t |\rho|_{H^{10}}^2  =\sum_{|s| \leq 10}\int \partial^s \Big(\partial_{x} \psi(1-G)\Big) \Delta \partial^s\partial_x\psi $$

$$=\sum_{|s|\leq 10}\int (1-G)\partial^s\partial_{x} \psi \Delta \partial^s\partial_x\psi+\sum_{i=1}^{|s|} c_{i,s} \int \partial^{s-i}\partial_{x} \psi\partial^i G \Delta \partial^s\partial_x\psi$$

$$=-\sum_{|s|\leq 10 }\int (1-G)|\partial^s\partial_{x} \nabla\psi|^2+ \int\partial^s \partial_x \psi \nabla G \cdot  \nabla \partial^s\partial_x\psi +\sum_{i=1}^{|s|} c_{i,s} \int \nabla \Big(\partial^{s-i}\partial_{x} \psi\partial^i G\Big) \cdot\nabla \partial^s\partial_x\psi$$

$$\leq-\frac{3}{4} |R_1 \rho|_{H^{10}}^2+ C|G|_{W^{11,\infty}}|R_1\rho|_{H^{10}}^2$$

now if $|G|_{W^{11,\infty}}$ is small enough we see:

$$\partial_t |\rho|_{H^{10}}^2 \leq - |R_1\rho|_{H^{10}}^2$$ which implies that $|\rho|_{H^{10}}$ is uniformly bounded by its initial value:

$$|\rho|_{H^{10}}^2 \leq |\rho_0|_{H^{10}}^2.$$

By the same token, $$\partial_t |\rho|_{H^{8}}^2 \leq - |R_1\rho|_{H^{8}}^2.$$

Arguing as we did above when we proved the $L^2$ decay, we get:

$$|\rho|_{H^8} \leq\frac{|\rho_0|_{H^{10}}}{(1+t)^{5/2}}.$$

This concludes the proof of Proposition 2.6. 
\end{proof}
\section{The Proof of Theorem 1.2}

Let $\rho_0\in H^{20}$ be such that $|\rho_0|_{H^{20}}\leq \epsilon$ and suppose that $|\rho(t)|_{H^{20}}\leq 4\epsilon$ on a time interval $[0,T]$.

Then, the equation for $\bar\rho$ (4.3) reads:

$$\partial_t\bar\rho +u\cdot\nabla\bar\rho = L\bar\rho$$

with $$L\bar\rho= R_1^2\bar\rho (1-\partial_y\tilde\rho).$$ By assumption, $\tilde\rho$ is small in $H^{19}.$ This implies that $L$ has nice decay properties.

Using Duhamel's principle we have:

$$\bar\rho(t)= e^{Lt}\rho_0+ \int_0^{t} e^{L(t-s)} \overline {u\cdot\nabla\bar\rho}(s)ds $$

By the linear decay estimates on $L$ (Proposition 2.6) we know that  that $$|\bar\rho|_{H^{10}} \lesssim \frac{\epsilon}{(t+1)^2}+ \int_0^{t} \frac{1}{(t-s+1)^2} |u\cdot\nabla\bar\rho|_{H^{12}}(s)ds$$

$$\lesssim \frac{\epsilon}{(1+t)^2}+ \int_0^t \frac{1}{(t-s+1)^2} |\bar\rho|_{H^{10}}(s) |\rho|_{H^{13}}.$$

So,
$$|\bar{\rho}|_{H^{10}}(t)\lesssim \frac{\epsilon}{(1+t)^2}+ \int_0^t \frac{\epsilon}{(t-s+1)^2} |\bar\rho|_{H^{10}}(s)ds.$$

A simple bootstrap gives us that $$|\bar\rho|_{H^{10}}(t)\lesssim \frac{\epsilon}{(1+t)^2}.$$

Now using the energy estimate (3.1) we are finished.

\section{Acknowledgements}

The author acknowledges the support of an NSF Postdoctoral Research Fellowship. He also acknowledges helpful conversations with Jacob Bedrossian, Pierre Germain, and Nader Masmoudi.

\section{Appendix}

\subsection{Sharpness of the linear estimates}
\begin{prop} 
Estimates (2.4)-(2.6) are sharp in the sense that there exist Schwartz functions $e^{R_1^2t }f_i$ decays in $L^2$ just as dictated in the inequalities.
\end{prop}

\begin{proof}

We only give the proof for (2.4) and (2.6), the others being similar. 

For (2.4) we can take any radial function $f$. 
Recall that the Fourier transform of a radial function is radial. 
Then we have:

$$|e^{R_1^2t}f|_{L^2}^2= \int_0^{2\pi}\int e^{-2cos^2(\theta)t} |\hat{f}(r)|^2rdrd\theta= |f|_{L^2}^2 \int_0^{2\pi} e^{-cos^2(\theta)t}d\theta.$$

Hence, for any radial function $f$, $$|e^{R_1^2 t}f|_{L^2} \approx (1+t)^{-\frac{1}{4}} |f|_{L^2}.$$

This shows that (2.4) is sharp. 

Showing that (2.7) is sharp requires that we construct a sequence of functions which more and more (in Fourier space) along $\xi_1=0$. Indeed, by the dominated convergence theorem $|e^{R_1^2t} f|_{L^2}^2\rightarrow 0 $ as $t\rightarrow \infty$ for any $f\in L^2.$ The point is to take a sequence of $f's$ depending on $t$ for which $|f_t|_{L^2}^2=1$ and $|e^{R_1^2t} f_t|_{L^2}\not\rightarrow 0.$

Since we are working in $L^2$ we can look purely in Fourier space and we define $\phi(\xi_1,\xi_2)= \psi_t(\theta) g(r).$

Then we take $\phi=\hat{f}.$ Note that $f\in L^2$ if and only if $rg\in L^2(0,\infty)$ and $\psi_t\in L^2(0,2\pi)$. 

$$|e^{R_1^2} f|_{L^2}^2 = C_g \int_{0}^{2\pi} e^{-cos^2(\theta)t} |\psi_t(\theta)|^2d\theta. $$

Now we take $$\psi_t=\sqrt{t+1} \chi_{[\pi/2-\frac{1}{t+1},\pi/2+\frac{1}{t+1}]}.$$

Then we get:

$$|e^{R_1^2} f_t|_{L^2}^2 = C_f (t+1)\int_{\pi/2-\frac{1}{t+1}}^{\pi/2+\frac{1}{t+1}} e^{-cos^2(\theta)t}\geq . C_g (t+1) \int )\int_{\pi/2-\frac{1}{t+1}}^{\pi/2+\frac{1}{t+1}}e^{-1}d\theta \geq c. $$

This implies that $$|e^{R_1^2t}f|_{L^2\rightarrow L^2}\geq c$$ where $c$ is independent of $t$.

\end{proof}

\subsection{The necessity of having a nonlinearity with a "null structure"}

\begin{lemm}

There exists a smooth function $f$ on $\mathbb{T}^2$ such that for all $\epsilon>0$, the solution of 

$$\partial_t \rho + \rho\partial_y \rho = R_1^2 \rho. $$
 $$\rho_0 =\epsilon f$$ blows up at time $T\approx\frac{1}{\epsilon}$. 

\end{lemm}

\begin{rema}
This example illustrates that in certain non-linear problems, the damping operator $R_1^2$ has little or no ability to stop (or even delay!) the non-linearity from producing growth even for arbitrarily small data. 
\end{rema}

This is simply due to the fact that $f$ can be taken to be a function of $y$ since the linear term will vanish on functions of $y$ only. It is known that any non-trivial periodic function of $y$ develops a shock in finite time under the evolution of $\partial_t\rho +\rho\partial_y \rho=0.$

We believe that the same equation can also blow up in finite time on the whole space with arbitrarily small initial data.

\bibliographystyle{plain}

\end{document}